\newcommand\mbb{\mathbb}
\newcommand\mcal{\mathcal}
\newcommand\ol{\overline}
\newcommand\wt{\widetilde}
\newcommand\sS{\mcal{S}}
\newcommand\sV{\mcal{V}}
\newcommand\C{\mbb{C}}
\renewcommand\P{\mbb{P}}
\newcommand\Q{\mbb{Q}}
\newcommand\R{\mbb{R}}
\renewcommand\epsilon{\varepsilon}
\renewcommand\ge{\geqslant}
\renewcommand\le{\leqslant}
\renewcommand\mod{\mathop{\rm mod}}
\renewcommand\phi{\varphi}
\renewcommand\theta{\vartheta}
\renewcommand\div{\mathop{\rm div}}
\newcommand\suchthat{\ \colon\ }
\numberwithin{equation}{section}
\newtheoremstyle{smallthm}
{1em}
{1em}
{\small}
{0pt}
{\small\bfseries}
{}
{0pt}
{\thmname{#1}\thmnumber{ #2}\thmnote{ #3}.\enspace}
\theoremstyle{remark}
\theoremstyle{plain}
\newtheorem{Thm}[equation]{Theorem}
\newtheorem{Cor}[equation]{Corollary}
\newtheorem{Lemma}[equation]{Lemma}
\newtheorem*{Thm*}{Theorem}
\newtheorem*{Prop*}{Proposition}
\newtheorem*{Cor*}{Corollary}
\newtheorem*{Lemma*}{Lemma}
\newtheorem*{Sublemma*}{Sublemma}
\newtheorem*{Conjecture*}{Conjecture}
\theoremstyle{definition}
\newtheorem{Def}[equation]{Definition}
\newtheorem{Example}[equation]{Example}
\newtheorem{Question}[equation]{Question}
\newtheorem{Remark}[equation]{Remark}
\newtheorem*{Def*}{Definition}
\newtheorem*{Defs*}{Definitions}
\newtheorem*{Example*}{Example}
\newtheorem*{Examples*}{Examples}
\newtheorem*{LemmaDef*}{Lemma and Definition}
\newtheorem*{Notation*}{Notation}
\newtheorem*{Problem*}{Problem}
\newtheorem*{Question*}{Question}
\newtheorem*{Remark*}{Remark}
\newtheorem*{Remarks*}{Remarks}
\newtheorem*{Warning*}{Warning}
\theoremstyle{smallthm}
\newtheorem*{Exercise*}{Exercise}
\numberwithin{Exercise}{section}
\begin{document}

\title[]{Spectrahedral representations of plane hyperbolic curves}

\begin{abstract}
 We describe a new method for constructing a spectrahedral representation of the hyperbolicity region of a  hyperbolic curve in the real projective plane. As a consequence, we show that if the curve is smooth and defined over the rational numbers, then there is a spectrahedral representation with rational matrices. This generalizes a classical construction for determinantal representations of plane curves due to Dixon and relies on the special properties of real hyperbolic curves that interlace the given curve.
\end{abstract}

\author{Mario Kummer}
\author{Simone Naldi}
\author{Daniel Plaumann}

\maketitle

\section*{Introduction}

Determinantal representations of plane curves are a classical topic in algebraic geometry. Given a form $f$ (i.e.~a homogeneous polynomial) of degree $d$ in three variables with complex coefficients and a general form $g$ of degree $d-1$, there exists a $d\times d$ linear matrix $M = xA+yB+zC$ such that $f$ is the determinant of $M$ and $g$ a principal minor of size $d-1$ (see for example \cite[Ch.~4]{dolgachev2012classical}). The matrix $M$ can be chosen symmetric if $g$ is a \emph{contact curve}, which means that all intersection points between the curves defined by $f$ and $g$ have even multiplicity. The construction of $M$ from $f$ and $g$ is due to Dixon \cite{dixon1902note} (following Hesse's much earlier study of the case $d=4$). We refer to this construction as the \emph{Dixon process}. 

For real curves, the most interesting case for us is that of \emph{hyperbolic curves}. The smooth hyperbolic curves are precisely the curves whose real points contain a set of $\lfloor \frac d2\rfloor$ nested ovals in the real projective plane (plus a pseudo-line if $d$ is odd). A form $f \in \R[x,y,z]$ is hyperbolic if and only if it possesses a real symmetric determinantal representation $f=\det(M)$ such that $M(e)=e_1A+e_2B+e_3C$ is (positive or negative) definite for some point $e\in\P^2(\R)$. This is the \emph{Helton--Vinnikov theorem}, which confirmed a conjecture by Peter Lax \cite{HV07}.

The Helton--Vinnikov theorem received a lot of attention in the context of semidefinite programming, which was also part of the original motivation: The set of points $a \in \R^3$ for which the matrix $M(a)$ is positive semidefinite is a \emph{spectrahedron}:
$$
\mathcal{S}(M) = \{a \in \R^3 : M(a) \succeq 0\}.
$$
It coincides with the \emph{hyperbolicity cone} $C(f,e)$ of $f=\det M$ in direction $e$, that is the closure of the connected component of $\{a \in \R^3 : f(a) \neq 0\}$ containing $e$. This is a convex cone in $\R^3$, whose image in $\P^2$ is the region enclosed by the convex innermost oval of the curve (see Figure \ref{fig:firstfigure}). A triple of real symmetric matrices $A,B,C$ is a {\it spectrahedral representation} of $C(f,e)$ if $M=xA+yB+zC$ satisfies
$$
C(f,e) = \mathcal{S}(M).
$$

\begin{figure}[!ht]
\centering
\includegraphics[width=8cm]{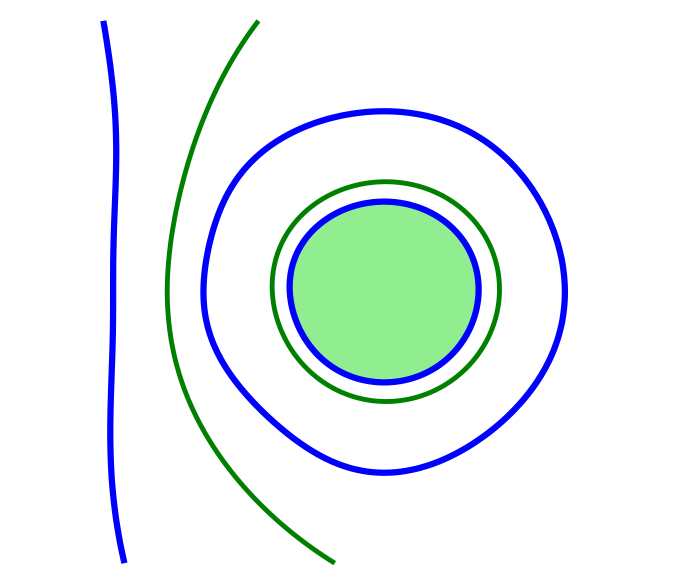}
\caption{A quintic hyperbolic curve (blue), a quartic interlacer (green), and the hyperbolicity region (green region)}
\label{fig:firstfigure}
\end{figure}

It has been pointed out by several authors \cites{Vppf,PV13} that the proof of the Helton--Vinnikov theorem becomes much simpler if one requires the matrix $M$ to be only hermitian, rather than real symmetric. In that case, $M$ can be constructed via the Dixon process starting from any \emph{interlacer} of $f$: That is, any hyperbolic form $g$ of degree $d-1$ whose ovals are nested between those of the curve defined by $f$ (see Figure \ref{fig:firstfigure}).
One downside of this apparent simplification is that the corresponding determinantal representation $f=\det(M)$ with principal minor $g$ is harder to construct explicitly, since one has to find the intersection points of $f$ and $g$, while this can be avoided if $g$ is a contact curve. We refer to \cite{Vppf} for a survey of these results.

\bigskip
In this paper, we study a modification of the Dixon process, which can be described as follows: Given a form $f$ of degree $d$, hyperbolic with respect to $e$, and an interlacer $g$ of degree $d-1$, we construct a real symmetric matrix pencil $M$ with the following properties:
\begin{itemize}
\item The determinant $\det(M)$ is divisible by $f$.
\item The principal minor $\det(M_{11})$ is divisible by $g$.
\item The extra factors $\det(M)/f$ and $\det(M_{11})/g$ are products of linear forms.
\item The spectrahedron defined by $M$ coincides with $C(f,e)$.
\end{itemize}

The extra factor in our spectrahedral representation of $C(f,e)$ is an arrangement of real lines, as in Figure \ref{fig:extralines}. Informally speaking, these additional lines correct the failure of $g$ to be a contact curve by passing through the intersection points of $g$ with $f$ that are not of even multiplicitity.

\begin{figure}[!ht]
\centering
\includegraphics[width=10cm]{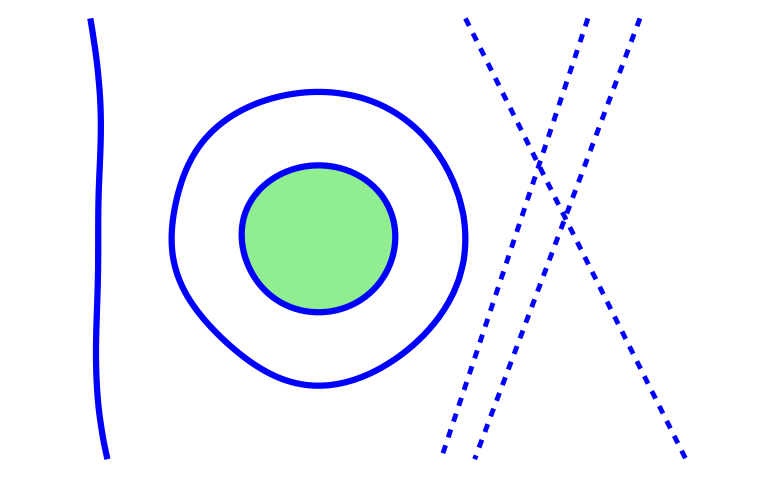}
\caption{The extra factor (dashed blue lines) giving the spectrahedral representation of the hyperbolicity region}
\label{fig:extralines}
\end{figure}

The precise statement is Theorem \ref{Thm:GenDixon}. The size of $M$ is at most quadratic in $d$. Thus while $M$ may not be the smallest or simplest determinantal representation of (some multiple of) $f$, it is easier to construct and may better reflect properties of the hyperbolicity region $C(f,e)$: as a corollary, we show that if $f$ has coefficients in $\Q$, then $C(f,e)$ can be represented by a linear matrix inequality with coefficients in $\Q$ (Cor.~\ref{thm:rationalspec}).  We may also view Theorem \ref{Thm:GenDixon} in the context of the \textit{Generalized Lax Conjecture}, which states that every hyperbolicity region (in any dimension) is spectrahedral. While various stronger forms of this conjecture have been disproved, it remains open as stated. One obstacle for constructing symmetric determinantal representations in higher dimensions is the nonexistence of contact interlacers for general hyperbolic hypersurfaces. Since our generalized Dixon process does not require the interlacer to be contact, it is possible that a spectrahedral description of the hyperbolicity cone could be constructed in a similar way, but this is currently purely speculative. In \S\ref{sec:bez} we point out how our construction is related to sum-of-squares decompositions of B\'ezout matrices and the construction in \cite{DetBez}.
 
Even in the original Dixon process for plane curves, details are somewhat subtle: For the construction to succeed as stated, the curve defined by $f$ must be smooth, and the existence of a contact curve satisfying the required genericity assumption (equivalent to the existence of a non-vanishing even theta characteristic) was not rigourously established until somewhat later. Additionally, the case of singular curves was, to our knowledge, only fully settled and explicitly stated by Beauville in 2000 \cite{beauville}. Likewise, in our generalized Dixon process, we need to treat degenerate cases with care and need some genericity assumptions.

Our generalized Dixon process has the additional feature that the size of the matrix $M$ decreases if the interlacer $g$ has real contact points with $f$. In particular, if $g$ is an interlacer with \textit{only real intersection points}, our statement reduces to that of the Helton--Vinnikov theorem. This leads us to the study of interlacers with real intersection (i.e.~contact) points. Such interlacers are necessarily on the boundary of the cone ${\rm Int}(f,e)$ of all interlacers of $f$. An extreme ray of that cone will necessarily have a certain number of real contact points (Lemma \ref{Lemma:ExtremalContact}). However, we do not know whether there always exists an interlacer with the maximal number $d(d-1)/2$ of real contact points. Even in the case $d=4$, we only obtain a partial answer to this question (see \S\ref{subsection:quartics}). There remain interesting (and easily stated) open questions concerning interlacing curves and the geometry of the interlacer cone.

\newpage
\section{Extremal interlacers}

Let $f\in\R[x,y,z]$ be homogeneous of degree $d$ and hyperbolic with respect to
$e=(0:0:1)$, with $f(e)>0$. Let $C=\sV_\C(f)$ be the plane projective
curve defined by $f$. We denote by $C(f,e)$ the closed hyperbolicity region of $f$ with respect to $e$ in the real projective plane.

\begin{Def}
  Let $f,g\in\R[t]$ be univariate polynomials with only real zeros and with $\deg(g)=\deg(f)-1$. Let
  $\alpha_1\leq\cdots\leq\alpha_d$ be the roots of $f$, and let
  $\beta_1\leq\cdots\leq\beta_{d-1}$ be the roots of $g$. We say that
  \emph{$g$ interlaces $f$} if $\alpha_i\le\beta_i\le\alpha_{i+1}$ holds for all
  $i=1,\dots,d-1$. If all these inequalities are strict, we say that
  \emph{$g$ strictly interlaces $f$}.

  If $f\in \R[x,y,z]$ is hyperbolic with respect to $e$ and $g$ is homogeneous of
  degree $\deg(f)-1$, we say that \emph{$g$ interlaces $f$ with
    respect to $e$} if $g(te+v)$ interlaces $f(te+v)$ for every
  $v\in\R^3$. This implies that $g$ is also hyperbolic with
  respect to $e$. We say that $g$ \emph{strictly interlaces $f$} if
  $g(te+v)$ strictly interlaces $f(te+v)$ for every $v\in\R^3$ not in $\R e$.
\end{Def}

With $f$ as above, let $g$ be any form in $\R[x,y,z]$
coprime to $f$. We say that an intersection point $p\in\sV_\C(f,g)$ is
a \emph{contact point} of $g$ with $f$ if the intersection
multiplicitity ${\rm mult}_p(f,g)$ is even.  If all intersection points are contact points, then $g$ is called a \emph{contact curve} of $f$. A \emph{curve of real contact} is a curve $g$ for which all real intersection points are contact points, without any assumption on non-real intersection points. Any interlacer is a curve of real
contact. 

Interlacers of $f$ appear naturally in the context of determinantal representations of $f$ (see \cites{PV13, us}). For example, if $f=\det(xA+yB+zC)$ is a real symmetric and definite determinantal representation of $f$, then every principal $(d-1)\times(d-1)$ minor of $xA+yB+zC$ is an interlacer of $f$ (see \cite[Thm. 3.3]{PV13}). Furthermore, such a minor defines a contact curve (see e.g.~\cite[Prop. 3.2]{PV13}). Conversely, given any interlacer of $f$ that is also a contact curve, one can construct a definite determinantal representation of $f$ and therefore a spectrahedral representation of its hyperbolicity region of size $d\times d$. However, for computational purposes, it is very difficult to actually find such an interlacer, even though its existence is guaranteed by the Helton--Vinnikov Theorem \cite{HV07}. In Section \ref{sec:GenDixon}, we will introduce a method for constructing from an arbitrary interlacer a spectrahedral representation of possibly larger size. We denote by
\[
{\rm Int}(f,e)=\bigl\{g\in\R[x,y,z]_{d-1}\suchthat g\text{ interlaces
}f\text{ and }g(e)>0\bigr\}
\]
the set of interlacers of $f$. It is shown in \cite[Cor. 2.7]{us} that this is a closed convex cone. Every boundary point of this cone has at least one contact point. In order to find interlacers with many contact points, it is therefore natural to consider extreme rays of this cone.

\begin{Def}
  Let $f$ be hyperbolic with respect to $e$.
  By an \emph{extremal interlacer} of $f$
  we mean an extreme ray of the cone ${\rm Int}(f,e)$.
\end{Def}

The next lemma gives a lower bound on the number of real contact points of an extremal interlacer.

\begin{Lemma}\label{Lemma:ExtremalContact} Assume that $f$ defines a smooth curve of degree $d$.
  Any extremal interlacer of $f$ has at least
\[
\left\lceil \frac{(d+1)d-2}{4}\right\rceil
\]
real contact points with $f$, counted with multiplicity.
\end{Lemma}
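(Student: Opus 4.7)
The plan is to identify the lineality of the tangent cone to $\mathrm{Int}(f,e)$ at $g$ with a subspace of $\R[x,y,z]_{d-1}$ cut out by vanishing conditions along $C$, and then to estimate its dimension by Riemann--Roch on the smooth curve $C=\sV_\C(f)$. Let $p_1,\ldots,p_r\in C(\R)$ be the distinct real contact points of $g$ with $f$, of multiplicities $2k_1,\ldots,2k_r$, so that $m=\sum_i 2k_i$, and set $D=\sum_i 2k_i\,p_i\in\Div(C)$, of degree $m$. I will focus on the subspace
\[
L_g\;=\;\bigl\{\,h\in\R[x,y,z]_{d-1}\suchthat\ord_{p_i}(h|_C)\ge 2k_i\text{ for every }i\,\bigr\},
\]
and aim to show that it coincides with the linear span of the minimal face $F_g\subseteq\mathrm{Int}(f,e)$ containing $g$.

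The inclusion $\mathrm{span}(F_g)\subseteq L_g$ is a local Taylor analysis. In a local parameter $t$ on $C$ at $p_i$, write $g|_C=a_i\,t^{2k_i}+\cdots$ with $a_i\ne 0$ and suppose $h|_C=b\,t^j+\cdots$ with $j<2k_i$. Then $(g\pm\epsilon h)|_C\sim t^j\bigl(\pm\epsilon b+a_i\,t^{2k_i-j}\bigr)$ produces a simple real zero close to $t=0$ for a suitable choice of sign of $\epsilon$, contradicting the interlacer property. For the reverse inclusion $L_g\subseteq\mathrm{span}(F_g)$, for $h\in L_g$ the same expansion shows that $(g+\epsilon h)|_C$ retains a zero of order exactly $2k_i$ at each $p_i$ for small $|\epsilon|$; away from the $p_i$ the section $g|_C$ is nowhere zero on $C(\R)$, so by compactness $(g+\epsilon h)|_C$ stays nonzero on the complement of small neighbourhoods of the $p_i$, and the complex conjugate pairs in the divisor of $g|_C$ persist under small perturbation. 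Combined with the openness of hyperbolicity, this yields $g\pm\epsilon h\in\mathrm{Int}(f,e)$.

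The second step is Riemann--Roch. Since $C$ is smooth of degree $d$ with $g_C=\binom{d-1}{2}$, the restriction $\R[x,y,z]_{d-1}\to H^0(C,\mathcal{O}_C(d-1))$ is an isomorphism onto a space of (complex) dimension $N:=\binom{d+1}{2}$, and under it $L_g$ corresponds to $H^0(C,\mathcal{O}_C(d-1)(-D))$. Hence
\[
\dim L_g\;=\;d(d-1)-m-\binom{d-1}{2}+1+h^1\bigl(\mathcal{O}_C(d-1)(-D)\bigr)\;=\;N-m+h^1\;\ge\;N-m.
\]
Extremality forces $F_g=\R_{\ge 0}\,g$, so $\dim L_g=1$ and $m\ge N-1=\tfrac{(d+1)d-2}{2}\ge\lceil\tfrac{(d+1)d-2}{4}\rceil$, which yields the claim.

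The main obstacle is the reverse inclusion in Step~1. While the local Taylor analysis at each $p_i$ is clean and the complex zeros of $g|_C$ are stable, the perturbation argument uses that $g$ lies in the interior of the hyperbolic cone of $\R[x,y,z]_{d-1}$, so that hyperbolicity of $g+\epsilon h$ is automatic. Extremal interlacers for which $g$ is not strictly hyperbolic---for instance, when the line from $e$ to some contact point is itself tangent to $g$ there---require a separate treatment, typically by stratifying $\mathrm{Int}(f,e)$ or by a continuity reduction to the generic case; it is presumably the handling of such degenerate $g$ that forces the weaker bound $\lceil(N-1)/2\rceil$ in the statement, rather than the sharper $N-1$ obtained under strict hyperbolicity.
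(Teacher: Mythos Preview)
Your argument is the paper's argument dressed up with unnecessary machinery, and the closing paragraph rests on a miscount. In the statement, ``real contact points counted with multiplicity'' means the degree of the effective divisor $D$ for which $2D$ is the real part of $\div_C(g)$; in your notation this is $k=\sum_i k_i=m/2$, not $m$. Your bound $m\ge N-1$ is therefore exactly $k\ge\lceil(N-1)/2\rceil$: nothing has been lost, and the speculation about degenerate $g$ forcing a ``weaker bound'' is unfounded.

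The worry about hyperbolicity of $g+\epsilon h$ is likewise misplaced. Interlacing is equivalent to the sign condition $g\cdot D_e f\ge 0$ on $\sV_\R(f)$ (this is how the paper itself uses \cite[Thm.~2.1]{us} in Step~6 of \S\ref{sec:GenDixon}); once your Taylor-plus-compactness argument shows that $(g+\epsilon h)|_{C(\R)}$ keeps the same sign as $g|_{C(\R)}$ for every $h\in L_g$, interlacing of $g+\epsilon h$ follows directly, and hyperbolicity with it. No separate appeal to openness of the hyperbolic cone is needed, so the ``main obstacle'' you identify does not exist.

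The paper's proof is accordingly just a few lines: the space $V$ of $(d{-}1)$-forms $h$ with $\div_C(h)\ge 2D$ has dimension at least $\binom{d+1}{2}-2k$ by the naive count of linear conditions; if this exceeds $1$, any $h\in V$ independent of $g$ satisfies $g\pm\epsilon h\in{\rm Int}(f,e)$ for small $\epsilon$, contradicting extremality; hence $2k\ge\binom{d+1}{2}-1$. Riemann--Roch and the inclusion ${\rm span}(F_g)\subseteq L_g$ play no role.
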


\begin{proof}
  Let $g$ be an extremal interlacer and let $k$ be the
  number of real contact points of $g$. By definition, the real part
  of the divisor $\div_C(g)$ is even, say $2D$, with $D$ real and
  effective of degree $k$. The space $V$ of forms $h$ of degree $d-1$
  with $\div_C(h)\ge 2D$ has dimension at least $n=(d+1)d/2-2k$ and
  contains $g$. If $n>1$, then $V$ contains another form $h$ linearly
  independent of $g$. We conclude that
  $g\pm\epsilon h\in{\rm Int}(f,e)$ for sufficiently small
  $\epsilon$. Thus $g$ is not extremal. Therefore, we must have
  $n\le 1$, which gives $k \ge \frac{(d+1)d-2}{4}$.
\end{proof}

\begin{Remark}
 For smooth $f$, given any $d-1$ real points on the curve, there is an extremal interlacer touching the curve in (at least) the given points. Indeed, it is clear from the above proof that it suffices to show that there is an interlacer passing through these $d-1$ points. The quadratic system of interlacers considered in \cite[Def. 3.1]{PV13} has dimension $d$, so we can prescribe $d-1$ points.
\end{Remark}

\begin{Remark}\label{rem:IrredExtremal}
	We do not know whether every hyperbolic curve possesses an {\it irreducible} extremal interlacer. This is true if $C$ is a smooth cubic: For any two distinct points $p$ and $q$ on $C$, there is an extremal interlacing conic $Q$  passing through $p$ and $q$, by the preceding remark. If $Q$ is reducible, it must factor into the two tangent lines to $C$ at $p$ and $q$. But $Q$ is a contact curve by Lemma \ref{Lemma:ExtremalContact}, hence the intersection point of the two tangents must lie on $C$. Clearly, this will not be the case for a generic choice of $p$ and $q$. This observation will be used at one point later on. It does not seem clear how to generalize this argument to higher degrees.
\end{Remark}

The following table shows the expected number of real contact points of an extremal interlacer compared with the
number of points for a full contact curve. 
\[
\begin{array}{c|cccccc}
  d                                           & 2 & 3 & 4 & 5 &6 & \cdots
  \\
  \hline
  \left\lceil \frac{(d+1)d-2}{4}\right\rceil  & 1 & 3 & 5 & 7 & 10
                                                              &\cdots\\
  \hline
  \frac{d(d-1)}{2}                             & 1 & 3 & 6 & 10 & 15 &\cdots
\end{array}
\]

An interlacer can have many more real contact points than the estimate given by Lemma \ref{Lemma:ExtremalContact}
and we do not know whether there is always one with only real intersection points.

\begin{Question}
 Does every hyperbolic plane curve have an interlacer that intersects the curve only in real points?
\end{Question}

Even without the interlacing condition, it seems to be unknown whether a real curve always possesses a real contact curve with only real contact points. In the case of plane quartic curves we have some partial answers to that question.

\subsection{The case of quartics}\label{subsection:quartics}
Let $C\subseteq\P^2$ be a smooth hyperbolic quartic that has a real bitangent touching $C$ in only real points. We will show that in this case there is a contact interlacer touching $C$ only in real points. It suffices to show that there is a conic touching both ovals in two real points. This, together with the above bitangent, will be the desired totally real interlacer.

Assume that $C(\R)$ is contained in the affine chart $z\neq 0$ (for smooth quartic curves this is not a restriction). Let $l\in\R[x,y]_1$ be a nonzero linear form. Maximizing and minimizing $l$ on the hyperbolicity region gives us two different linear polynomials $l_1$ and $l_2$ that are parallel and whose zero sets are tangent to the inner oval at some points $p_1$ and $p_2$ (see Figure \ref{fig:quartics}).

Choose the signs such that both $l_1$ and $l_2$ are nonnegative on the inner oval. We consider the pencil of conics whose zero sets pass through $p_1$ and $p_2$ such that the tangent lines of the conics at $p_1$ and $p_2$ are defined by $l_1$ and $l_2$ respectively. This pencil is given by $q_\lambda=g^2-\lambda l_1 l_2$, $\lambda\in\R$, where $g$ is the line spanned by $p_1$ and $p_2$. The zero set of $q_\lambda$ is completely contained in the interior of the outer oval for small $\lambda>0$. Label the two half spaces defined by $g$ by $1$ and $2$ and let $\lambda_i>0$ be the smallest positive number such that the zero set of $q_{\lambda_i}$ intersects the outer oval in the half-space labeled by $i$. We observe that both $q_{\lambda_i}$ have three real contact points with $C$.  If $\lambda_1=\lambda_2$, then we are done.

\begin{figure}[!ht]
\begin{subfigure}{.3\textwidth}
\centering
\includegraphics[width=4cm,height=4cm]{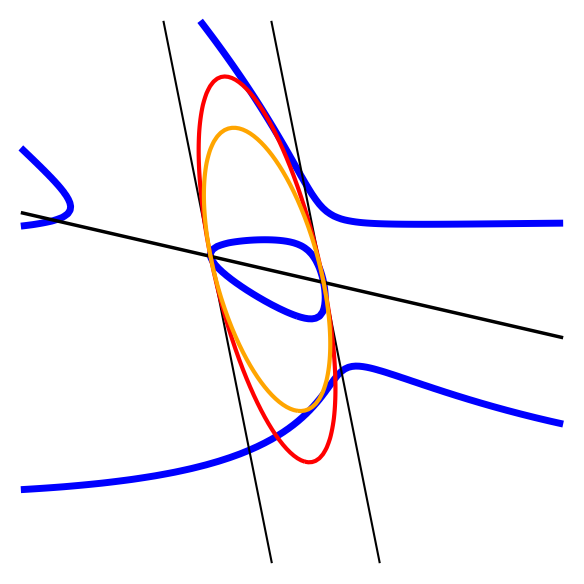}
\caption{}
\label{fig:quarticsA}
\end{subfigure}\hfill
\begin{subfigure}{.3\textwidth}
\centering
\includegraphics[width=4cm,height=4cm]{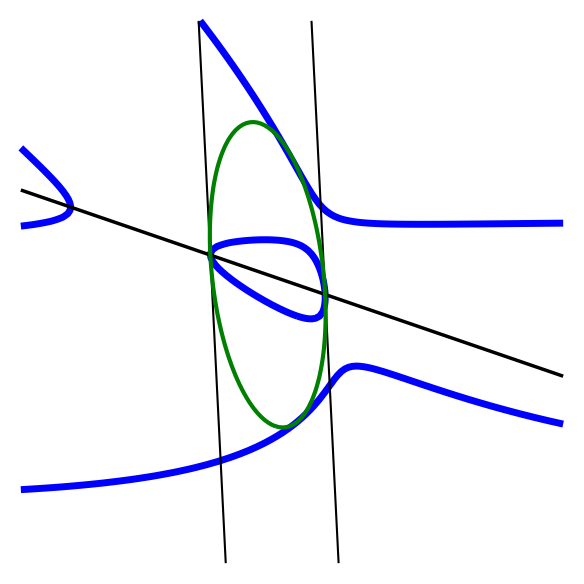}
\caption{}
\label{fig:quarticsB}
\end{subfigure}\hfill
\begin{subfigure}{.3\textwidth}
\centering
\includegraphics[width=4cm,height=4cm]{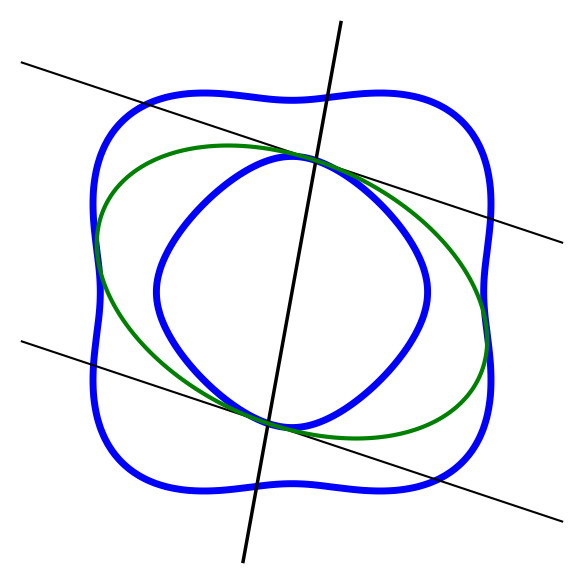}
\caption{}
\label{fig:quarticsC}
\end{subfigure}
\caption{Quadrics touching hyperbolic quartics in real points}
\label{fig:quartics}
\end{figure}

Now we let the linear form $l$, which we started with, vary continuously and we also keep track of the labels of the half-spaces in a continuous manner. The resulting conic $q_{\lambda_1}(l)$ depends continuously on $l$ and we note that $q_{\lambda_1}(-l)=q_{\lambda_2}(l)$. Note that one of the zero sets of $q_{\lambda_1}(l)$ resp. $q_{\lambda_1}(-l)$ on $C$ contains a pair of complex conjugate points (the orange oval in Figure \eqref{fig:quarticsA}) whereas the other one contains only real points of $C$ (the red oval in Figure \eqref{fig:quarticsA}). Therefore, there must be a linear form $l_0$ such that $q_{\lambda_1}(l_0)$ has the desired properties (Figure \eqref{fig:quarticsB} and \eqref{fig:quarticsC}).

If there is no bitangent touching the quartic in two real points, we do not know whether there always exists an interlacer intersecting the curve in only real points. The next example shows that this is at least sometimes the case.
\begin{Example}
 We consider the smooth plane quartic defined by \begin{eqnarray*}f&=&1250000 x^4-1749500 x^3 y-2250800 x^2 y^2-4312500 x^2 z^2\\&&+69260 x y^3+786875 x y z^2+88176 y^4+1141000 y^2 z^2+1687500 z^4.\end{eqnarray*} Its real locus consists of two nested ovals both of which are convex (Figure  \ref{fig:6points}), meaning that there is no bitangent touching the curve in two real points. Nevertheless the interlacer given by $$g=500 x^3-800 x^2 y-740 x y^2-625 x z^2+176 y^3+1000 y z^2$$ intersects the quartic curve only in real points. Indeed, its divisor is given by: $$4\cdot (4:-5:0)+2\cdot(11:5:0)+2\cdot(1:5:0)+2\cdot(7:10:-10)+2\cdot(7:10:10).$$
 \begin{figure}[h]
     \includegraphics[width=7cm]{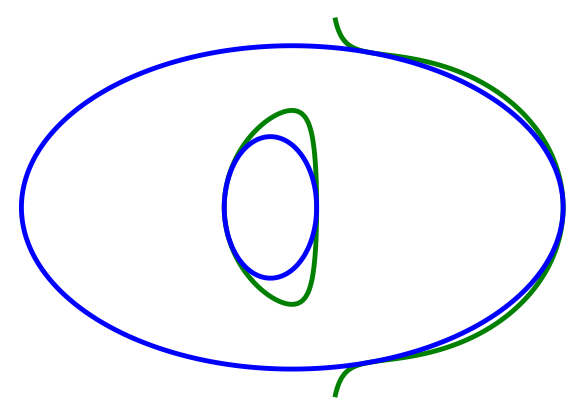}
 \caption{A hyperbolic quartic curve (in blue) and a cubic interlacer (in green) with only real intersection points}
 \label{fig:6points}
 \end{figure}
\end{Example}

\section{A generalized Dixon process}\label{sec:GenDixon}

Given a real hyperbolic form $f$ of degree $d$ and an interlacer $g$ of degree $d-1$, we wish to produce a real symmetric determinantal representation of $f$ with a principal minor divisible by $g$. If $g$ is a contact curve, this is achieved through the classical Dixon process. We will extend the procedure in such a way that the resulting representation will reflect any real contact points between $f$ and $g$, relating to our discussion of extremal curves of real contact in the previous section.

Let $f$ be irreducible and hyperbolic with respect to $e\in\P^2(\R)$ and assume that the plane curve $\sV_\C(f)$ is smooth. Let $g$ be an interlacer of $f$ with $r$ real contact points $p_1,\dots,p_r$, counted with multiplicities. Consider the $d(d-1)-2r$ further intersection points, which are non-real and therefore come in complex conjugate pairs, say $q_1,\dots,q_s,\ol q_1,\dots,\ol q_s$, so that $d(d-1)=2r+2s$. For each $i=1,\dots,s$ let $\ell_i$ be a linear form defining the unique (real) line joining $q_i$ and $\ol{q_i}$. We will make the following assumptions:

\begin{itemize}
\item[(G1)] No three of the intersection points of $f$ with $g$ lie on a line.
\item[(G2)] No three of the $\ell_i$ pass through the same point.
\item[(G3)] $f$ does not vanish on any point where two of the $\ell_i$ intersect.
\end{itemize}
We begin by showing that such an interlacer always exists.

\begin{Lemma}
 There exists a strict interlacer for which the genericity assumptions (G1), (G2), and (G3) are satisfied.
\end{Lemma}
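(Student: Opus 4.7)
The plan is to combine openness of the strict interlacer cone with Zariski-genericity of (G1), (G2) and (G3). First, strict interlacers form a nonempty Euclidean-open subset $U$ of $S:=\R[x,y,z]_{d-1}$: existence follows, for instance, from the Renegar derivative construction, and openness is immediate from the definition. It therefore suffices to show that each of (G1), (G2) and (G3) fails only on a subset of $S(\R)$ with empty interior, which I will do by exhibiting a proper Zariski-closed subset of $S(\C)$ containing each failure locus.

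For (G1) the first step is to realise the failure locus as the image of an incidence variety: lines $L\subset\P^2$ (a $2$-parameter family) together with triples of distinct points of $L\cap\sV_\C(f)$ (finitely many per $L$) index linear subspaces of $S(\C)$ of codimension $3$, namely those forms vanishing on the triple. Codimension $3$ uses that any three distinct points on a smooth plane curve of degree $d\ge 2$ impose independent conditions on forms of degree $d-1$. Summing dimensions, the failure locus has codimension at least $1$ in $S(\C)$ and is therefore a proper Zariski-closed subset.

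For (G2) and (G3) the argument is analogous, using the stronger complex conditions: (G2$'$) no three of the $\binom{d(d-1)}{2}$ lines spanned by pairs of points of $\sV_\C(f)\cap\sV_\C(g)$ are concurrent, and (G3$'$) no two such lines meet at a point of $\sV_\C(f)$ other than at an intersection point. These clearly imply (G2) and (G3), since the lines $\ell_i$ form a subfamily of the lines in question. The corresponding incidence varieties fibre over pairs of points of $\sV_\C(f)$ together with an additional line or point, and codimension counts once more show the failure loci to be proper Zariski-closed subsets of $S(\C)$. The main obstacle will be setting these incidence varieties up carefully: the pairs of intersection points depend non-linearly on $g$, so one should parameterise by pairs of points on $\sV_\C(f)$, count the conditions ``$g$ vanishes on the pair'', and then add the one-dimensional condition of concurrency through a specified point or of the intersection lying on $\sV_\C(f)$.

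Finally, the intersection $W$ of the three Zariski-open complements is a nonempty Zariski-open subset of $S(\C)$; its complement has empty interior in $S(\R)$, so $W\cap U$ is a nonempty (in fact Euclidean-dense) subset of $U$. Any element of $W\cap U$ is a strict interlacer satisfying (G1), (G2) and (G3).
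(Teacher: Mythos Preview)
Your strategy---show that strict interlacers form a nonempty Euclidean-open set, and that each of (G1)--(G3) fails only on a proper Zariski-closed subset---is a legitimate alternative to the paper's perturbation argument. The paper instead moves the $d(d-1)$ intersection points directly, using the General Position Theorem to break bad configurations one at a time. Your approach is more uniform in spirit, but two concrete problems remain.

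First, your condition (G2$'$) as written is vacuous: among the $\binom{d(d-1)}{2}$ secant lines there are always triples through a common intersection point (take $\overline{p_1p_2},\overline{p_1p_3},\overline{p_1p_4}$), so (G2$'$) fails for every $g$ once $d\ge 3$. You need to restrict to lines spanned by \emph{disjoint} pairs.

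Second, and more seriously, even with that fix the dimension count for (G2$'$) collapses when $d=3$. In that case $g$ is a conic, so the six intersection points automatically lie on $\sV(g)$ and therefore impose only five independent conditions on $S=\R[x,y,z]_2$, not six. Your incidence variety then has dimension $5+(6-5)=6=\dim S$, and the count no longer shows that its image in $S$ is proper. Something beyond naive dimension counting is needed to rule out the possibility that \emph{every} conic section of a smooth cubic admits a concurrent triple of disjoint chords. The paper runs into the same obstruction: its perturbation argument requires moving six points independently, which needs $\frac{d(d+1)}{2}-1\ge 6$, i.e.\ $d\ge 4$. For cubics it gives a separate argument, exhibiting an irreducible contact conic (via Lemma \ref{Lemma:ExtremalContact} and Remark \ref{rem:IrredExtremal}) and observing that in the limit the three lines $\ell_i$ become three tangent lines to an irreducible conic, which can never be concurrent. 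Your proof would need an analogous device to close the $d=3$ case.
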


\begin{proof}
 Every choice of $k=\frac{1}{2}d(d+1)-1$ points on the zero set of $f$ that pose linearly independent conditions on forms of degree $d-1$ determines a unique such form. The other zeros of this $(d-1)$-form on the zero set of $f$ depend continuously on the choice of the $k$ points. By the General Position Theorem \cite[Ch. III, \S 1]{arba}, any neighborhood of the given interlacer contains a strict interlacer $g$ with the property that its zero set intersects the one of $f$ in $d(d-1)$ distinct points any $k$ of which pose linearly independent conditions on forms of degree $d-1$. Then we can slightly perturb any subset of $k$ points in this intersection, and thus $g$, so that the number of triples of points in the intersection that lie on a line decreases. Thus we can find a strict interlacer of $f$ with the property that no three intersection points with the zero set of $g$ lie on a line, so that genericity condition (G1) is satisfied. By the same argument, we can satisfy condition (G3). 
 
 For condition (G2), we need to move six points spanning three of the lines. Thus the same argument applies, provided that $k\ge 6$, which means $d>3$. The case $d\le 2$ being trivial, we are left with condition (G2) for cubics ($d=3$). In this case, we argue as follows: Suppose there is no interlacing conic satisfying condition (G2). Since the condition is Zariski-open, this would imply that condition (G2) is violated for any conic, strictly interlacing or not. But
 Lemma \ref{Lemma:ExtremalContact} and the subsequent Remark \ref{rem:IrredExtremal} imply that there exists an irreducible conic $g$ touching $f$ in three real points. Considering $g$ as the limit of forms all of whose intersection points with $f$ are simple, the assumption will imply that the three tangents to $\sV(g)$ at the contact points meet in one point. But since $g$ is irreducible of degree $2$, this is impossible. This contradiction shows the claim.
\end{proof}
 
Under these genericity assumptions, we will construct a symmetric linear determinantal representation $M$ of $\ell_1\cdots \ell_s\cdot f$ such that $\sS(M)$ is the hyperbolicity region of $f$. Furthermore, the interlacer $g$ divides a principal minor of $M$. The main result of this section is as follows:

\begin{Thm}\label{Thm:GenDixon}
  Let $f$ be an irreducible form of degree $d$ that is hyperbolic with respect to $e\in\P^2(\R)$ and assume that the plane curve $\sV(f)$ is smooth. Let $g$ be an interlacer of $f$ with $r$ real contact points, counted with multiplicities, that satisfies the genericity assumptions (G1), (G2), (G3). Then there exists a symmetric linear matrix
  pencil $M$ of size
\[
   m=\frac{d^2+d-2r}{2}
\]
  which is positive definite at $e$ and such
  that $C(f,e)=\sS(M)$. We can
  choose $M$ in such a way that $g$ divides the principal minor $M_{1,1}$ of $M$ and $\det(M)/f$ is a product of $m-d$ linear
  forms. Furthermore, each $(m-1)\times (m-1)$ minor $M_{1l}$, $1\leq l\leq m$, of $M$ is also divisible by the product of these $m-d$ linear forms.
\end{Thm}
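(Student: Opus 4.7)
The strategy is to generalize the classical Dixon process (which handles the case $s=0$, i.e., $g$ a contact interlacer) by passing to the reducible curve $\sV_\C(\tilde f)$ of degree $m=d+s$, where $\tilde f = f\cdot\ell_1\cdots\ell_s$. Each line $\sV_\C(\ell_i)$ is chosen to pass through one conjugate pair $(q_i,\ol q_i)$ of non-real intersection points of $f$ and $g$. These extra linear factors effectively convert the simple non-real intersections (where $g$ fails to be a contact curve of $f$) into contact-like data on the enlarged curve, making the augmented pair amenable to a Dixon-type construction producing a symmetric pencil of size $m$.

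Concretely, I would build $M$ from the linear system $W\subseteq\R[x,y,z]_{m-1}$ of forms $\ell_1\cdots\ell_s\cdot h$, where $h\in\R[x,y,z]_{d-1}$ vanishes on the real contact divisor $D$ of $g$ on $C=\sV_\C(f)$ (of degree $r$). A direct parameter count (or Riemann--Roch on $C$) yields $\dim W = \binom{d+1}{2}-r = m$, relying on (G1) to guarantee that $D$ imposes independent conditions on degree-$(d-1)$ forms. Fix a basis $h_1,\dots,h_m$ of $W$ with $h_1 = g\cdot\ell_1\cdots\ell_s$. Then define $M$ so that the first row of its adjugate is $(h_1,\dots,h_m)$, fixing the remaining entries by a symmetric residue (Serre-duality) pairing on $C$ analogous to the classical Dixon construction, or equivalently via a Bezoutian-type pairing as suggested in \S\ref{sec:bez}. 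Conditions (G2) and (G3) are used to ensure that the lines $\ell_i$ are in sufficiently general position for the resulting pencil to be symmetric, non-degenerate, and of full size $m$.

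Verification is then largely mechanical: $M$ is rank-deficient on each irreducible component of $\sV_\C(\tilde f)$, which forces $\det M = c\cdot f\cdot\ell_1\cdots\ell_s$ by matching degrees; the first row of the adjugate consists of elements of $W$, each divisible by $\ell_1\cdots\ell_s$; and $M_{1,1} = c'\cdot g\cdot\ell_1\cdots\ell_s$ follows from the choice of $h_1$. Positive definiteness at $e$ (after normalizing signs of $g$ and the $\ell_i$) together with $\sS(M)=C(f,e)$ follow by standard signature/continuity arguments, using that each real line $\ell_i$ meets $\sV_\C(f)$ only at non-real points besides its $d-2$ ``extra'' intersections, so that it does not cut into the convex region $C(f,e)$. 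The main obstacle is the construction step itself: the divisor of $g\cdot\ell_1\cdots\ell_s$ on $C$ is $2D+2Q+\sum_i E_i$, where $Q=\sum_i(q_i+\ol q_i)$ and $E_i$ collects the $d-2$ additional intersections of $\ell_i$ with $C$, and these $E_i$ contributions must be properly accommodated by the theta-characteristic bookkeeping underlying the classical Dixon pairing. Conditions (G1)--(G3) are exactly what is needed to rule out the degeneracies (collinear contact points, concurrent lines $\ell_i$, collisions of $\ell_i\cap\ell_j$ with $\sV_\C(f)$) that would otherwise obstruct the dimension count or the symmetry of the pairing.
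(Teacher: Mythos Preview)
Your high-level strategy---pass to $\tilde f=f\cdot\ell_1\cdots\ell_s$ and run a Dixon-type construction of size $m=d+s$ on the reducible curve $\sV(\tilde f)$---is exactly the paper's. You also correctly identify the first row of the adjugate matrix $N$ as $h\cdot a_k$ with $h=\ell_1\cdots\ell_s$ and $a_k\in\R[x,y,z]_{d-1}$ vanishing on the real contact divisor, and your dimension count $\dim W=m$ via (G1) matches the paper's Step~1).

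However, the phrase ``fixing the remaining entries by a symmetric residue (Serre-duality) pairing on $C$'' conceals a genuine gap. Max Noether on the smooth curve $C=\sV(f)$ gives, for $k,l\ge 2$, a form $b_{kl}$ of degree $d+s-1$ with $b_{kl}\,g\equiv h\,a_ka_l\pmod f$. This forces the $2\times 2$ minors of the resulting matrix $N$ to be divisible by $f$, but \emph{not} by $h$: the pairing lives on $C$ and says nothing about the line components $\ell_i$. The paper resolves this by exploiting the non-uniqueness of $b_{kl}$ modulo $f$: one may replace $b_{kl}$ by $c_{kl}=b_{kl}+qf$ with $\deg q=s-1$. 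For $k=l$, the free parameters in $q$ are spent (using (G2) and (G3)) first to make $c_{kk}$ vanish at all nodes $s_{ij}=\ell_i\cap\ell_j$, and then to force a real \emph{double} zero $2t_{ki}$ in the residual divisor of $c_{kk}$ on each $\ell_i$, together with a sign condition. For $k\neq l$ an additional lemma (working in $\R[t]/(f|_{\ell_i})\cong\C$) shows that the residual zero of $c_{kl}$ on $\ell_i$ is then automatically $t_{ki}+t_{li}$. Only after these adjustments do all $2\times 2$ minors of $N$ become divisible by $fh$, which is what makes $M=(fh)^{2-d-s}N^{\rm adj}$ a \emph{linear} pencil. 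None of this is ``theta-characteristic bookkeeping''; it is a hands-on construction on each line separately, and it is where (G2) and (G3) are actually used.

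Your treatment of definiteness at $e$ is likewise too quick. A continuity or signature argument on the reducible curve does not by itself show $\det M\not\equiv 0$ or $M(e)\succ 0$. The paper instead proves that the diagonal entry $c_{22}$ \emph{interlaces} $fh$: modulo $f$ one has $c_{22}\equiv (h\,a_2)^2/(gh)$, and modulo each $\ell_i$ the required nonnegativity is exactly the sign condition imposed when choosing the double zero $t_{ki}$ above. Definiteness of $M(e)$ and $\gamma\neq 0$ in $\det M=\gamma\, fh$ then follow from the interlacer criteria of \cite{PV13}. Your final sentence correctly flags the $E_i$ contributions as the main obstacle, but accommodating them is the substance of the proof rather than a detail.
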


The proof will consist of an algorithm that produces the desired representation given $f$ and $g$. 

We begin with some preliminaries. Given any two real ternary forms $f,g$ of degree $d$ resp.~$d'$ without common components, we denote by $(f.g)$ the \emph{intersection cycle} of $f$ and $g$, consisting of the intersection points of the curves $\sV(f)$ and $\sV(g)$ in $\P^2(\C)$. It is a $0$-cycle, i.e.~an element of the free abelian group over the points of $\P^2(\C)$. Explicitly, $(f.g)=\sum_{i=1}^r m_ip_i$, with $\sV(f)\cap\sV(g)=\{p_1,\dots,p_r\}$ and $m_i$ positive integers, the intersection multiplicities. By B\'ezout's theorem, we have $\sum_{i=1}^r m_i=dd'$. Intersection cycles are additive, i.e.~$((f_1\cdot f_2).g)=(f_1.g)+(f_2.g)$. Furthermore, there is a natural partial order on $0$-cycles, by comparing coeffcients. We need the following classical result from the theory of plane curves, which we restate in the form we require.

\begin{Thm}[Max Noether]
        Let $f,g,h$ be real ternary forms. Assume that $f$ is irreducible and does not divide $gh$, and that the curve $\sV(f)\subset\P^2(\C)$ is smooth. If $(h.f)\ge (g.f)$, then there exist real forms $a$ and $b$ such that 
	\[
	h=af+bg.
	\]
\end{Thm}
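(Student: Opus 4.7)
The plan is to reduce this classical theorem of Max Noether to a cohomological lifting argument on the smooth curve $C=\sV(f)\subset\P^2(\C)$. Since $f$ is irreducible and $f\nmid gh$, neither $f$ divides $g$ nor $h$, so the restrictions $g|_C$ and $h|_C$ are nonzero sections of the line bundles $\sO_C(\deg g)$ and $\sO_C(\deg h)$ respectively. As $C$ is smooth, each local ring $\sO_{C,p}$ is a discrete valuation ring, and the intersection multiplicity $\mathrm{mult}_p(g,f)$ coincides with the order of vanishing $\ord_p(g|_C)$, and analogously for $h$. Hence the cycle inequality $(h.f)\ge (g.f)$ translates into the divisor inequality $\div_C(h|_C)\ge \div_C(g|_C)$ on $C$; comparing total degrees yields $d\deg h\ge d\deg g$, so we may set $n:=\deg h-\deg g\ge 0$.

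The effective inequality above says that the a priori meromorphic section $h|_C/g|_C$ of $\sO_C(n)$ is in fact regular, i.e.\ it lies in $H^0(C,\sO_C(n))$. I would then lift it to a form on $\P^2$ by invoking the short exact sequence
\[
0\to\sO_{\P^2}(n-d)\xrightarrow{\,\cdot f\,}\sO_{\P^2}(n)\to\sO_C(n)\to 0,
\]
whose associated long cohomology sequence produces a surjection $H^0(\P^2,\sO(n))\twoheadrightarrow H^0(C,\sO_C(n))$ because $H^1(\P^2,\sO(k))=0$ for all $k\in\Z$. Picking a preimage $b\in\C[x,y,z]_n$ of $h|_C/g|_C$, the form $h-bg$ vanishes identically on $C$; since $f$ is irreducible this forces $h-bg=af$ for a unique $a\in\C[x,y,z]_{\deg h-d}$, so $h=af+bg$ over $\C$.

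Finally, to descend to real coefficients, note that $h,f,g$ are real. Conjugating the identity $h=af+bg$ gives $h=\bar af+\bar bg$, and averaging yields $h=\tfrac{a+\bar a}{2}f+\tfrac{b+\bar b}{2}g$ with real forms $(a+\bar a)/2,(b+\bar b)/2\in\R[x,y,z]$. The main step that requires care is the first one, namely the identification of $\mathrm{mult}_p(g,f)$ with $\ord_p(g|_C)$: this depends on smoothness of $C$ at every intersection point via the fact that the intersection multiplicity equals the length of $\sO_{\P^2,p}/(f,g)$, which in the DVR $\sO_{C,p}$ is exactly the valuation of $g|_C$. The cohomology vanishing $H^1(\P^2,\sO(k))=0$ underlying the lifting step is classical.
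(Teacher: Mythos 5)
Your argument is correct, but it follows a different route from the paper: the paper does not prove the statement at all, it simply invokes the classical Max Noether ``AF+BG'' theorem as in Fulton's \emph{Algebraic Curves} (\S 5.5, Prop.~1), where the result is established by verifying Noether's local conditions at each intersection point --- essentially the same DVR observation you make (smoothness of $C$ at $p$ turns $(h.f)\ge (g.f)$ into $h\in(f,g)\sO_{\P^2,p}$), followed by an elementary, cohomology-free patching argument that works over any base field, so real coefficients come for free. You instead globalize via the restriction sequence $0\to\sO_{\P^2}(n-d)\to\sO_{\P^2}(n)\to\sO_C(n)\to 0$ and the vanishing $H^1(\P^2,\sO(k))=0$, lifting the regular section $h|_C/g|_C$ to a form $b$ and then dividing $h-bg$ by the irreducible $f$; your final conjugation-and-averaging step correctly handles descent from $\C$ to $\R$, which the classical treatment renders unnecessary. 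Both proofs are sound; yours is shorter if one grants standard sheaf cohomology on $\P^2$ and uses the smoothness hypothesis (already assumed in the paper) in an essential way to identify intersection multiplicities with orders of vanishing, while the cited classical argument is more elementary and extends to singular curves under suitable local conditions.
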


\begin{proof}
	See \cite[\S5.5, Prop.~1]{max}.
\end{proof}

  Now let $f$ and $g$ be given as in the statement of Thm.~\ref{Thm:GenDixon}, with intersection points $p_1,\dots,p_r,q_1,\dots,q_s,\ol{q_1},\dots,\ol{q_s}$ as before, and let $\ell_i$ be the linear form defining the line between $q_i$ and $\ol{q_i}$, for $i=1,\dots,s$, under the genericity assumptions (G1)--(G3).

  Put $h=\ell_1\cdots \ell_s$ and consider the polynomial
  $fh$. It is of degree $(d^2+d-2r)/2=m$ and hyperbolic with respect
  to $e$. Furthermore, since each line $\ell_i$ meets $C$ in the
  non-real point $q_i$, none of the lines pass through $C(f,e)$,
  so that $C(fh,e)=C(f,e)$. 
  
  It therefore suffices to construct a symmetric linear
  determinantal representation of $fh$ which is definite at $e$. This
  can be carried out with a modification of Dixon's method, which we now describe in several steps. 
  
  1)  Let $V$ be the linear space of real forms of degree $d-1$ vanishing at $p_1,\dots,p_r$. We have $\dim(V)\geq \frac{(d+1)d}{2}-r= d+s$, and we pick linearly independent forms $a_1,\dots,a_{d+s}\in V$, with $a_1=g$. We introduce names for all the occuring intersection points: 
  \begin{align*}
  (a_1.f)&=(g.f)=2\sum_{j=1}^r p_j + \sum_{j=1}^s (q_j+\ol{q_j})\\
  (a_i.f)&=\sum_{j=1}^r p_j + \sum_{j=1}^{r+2s} p_{ij}\ \ \ \text{ for }i\ge 2\\
  (\ell_i.f)&=q_i+\ol{q_i}+\sum_{j=1}^{d-2} r_{ij}\\
  (\ell_i.\ell_j) &= s_{ij}\ \ \ \text{ for }i\neq j
  \end{align*}
  2) Fix $k,l\in\{2,\dots,d+s\}$ with $k\le l$. We wish to find a real form $b_{kl}$ of degree $d+s-1$ such that
  \begin{align}\label{eqn:bonf}
  b_{kl}g - h a_ka_l \in (f)
  \end{align}
  by applying Max Noether's theorem: We compute the intersection cycles
  \begin{align*}
  (ha_ka_l.f)&=2\sum_{j=1}^r p_j+\sum_{j=1}^s (q_j+\ol{q_j}) + \sum_{j=1}^s\sum_{j'=1}^{d-2} r_{jj'}+\sum_{j=1}^{r+2s} p_{kj}+\sum_{j=1}^{r+2s} p_{lj}\\
  (g.f)&=2\sum_{j=1}^r p_j+\sum_{j=1}^s (q_j+\ol{q_j})
  \end{align*}
  and thus find $b_{kl}$ with
  \[
  (b_{kl}.f)=\sum_{j=1}^s\sum_{j'=1}^{d-2} r_{jj'}+\sum_{j=1}^{r+2s} p_{kj}+\sum_{j=1}^{r+2s} p_{lj}.
  \]
  3) Assume that $k=l$. Then we will produce a real form $q$ of degree $s-1$ such that $c_{kk}:=b_{kk}+qf$ satisfies
  \[
  (c_{kk}.\ell_i)=(b+qf.\ell_i) = \sum_{j=1}^{d-2} r_{ij} + \sum_{j\neq i}s_{ij} + 2t_{ki}
  \]
  for some real point $t_{ki}\in\ell_{ki}$, for all $i=1,\dots,s$.
  To this end, we let $\ell_0$ be a linear form which does not vanish on any of the $s_{ij}$. Let $h_{ij}=\frac{\ell_0\cdots\ell_s}{\ell_i \ell_j}$ and $\alpha_{ij}=-\frac{b_{kk}(s_{ij})}{h_{ij}(s_{ij})f(s_{ij})}$ for $1\leq i<j\leq s$. Note that $h_{ij}$ vanishes on all $s_{mn}$ except for $s_{ij}$. After replacing $b_{kk}$ by $b_{kk}+\sum_{i,j} \alpha_{ij} h_{ij} f$, we can thus assume that $b_{kk}$ vanishes on all the $s_{ij}$.
  
  Next, we consider
  \[
  q_\alpha=\sum_{j=1}^s \alpha_j\frac{\ell_1\cdots\ell_s}{\ell_j}.
  \]
  with $\alpha_1,\dots,\alpha_s\in\R$. The form $q_\alpha$ satisfies $q_\alpha(s_{ij})=0$ for all $j\neq i$ for any choice of the $\alpha_j$. If we now take $q=\wt q+q_\alpha$, we find
  \[
  (b_{kk}+qf.\ell_i) = \sum_{j=1}^{d-2} r_{ij} + \sum_{j\neq i}s_{ij} + u_i+v_i
  \]
  with $u_i$ and $v_i$ depending on $\alpha$. Restricting to $\ell_i$ we therefore get $b_{kk}+q f=P\cdot(\tilde{b}+\alpha_i \tilde{f})$ where $P$ is a nonzero polynomial whose roots are the $r_{ij}$ and $s_{ij}$, and where $\tilde{b}$ and $\tilde{f}$ are polynomials of degree two. After possibly replacing $\alpha_i$ by its negative, we can assume that $\tilde{f}$ is strictly positive on $\ell_i$ since it has no real zeros on $\ell_i$. Therefore, we can choose $\alpha_i$ in such a way that $\tilde{b}+\alpha_i \tilde{f}$ has a double zero $t_{ki}$ and that makes the product of $b_{kk}+q f$ and $f\cdot\frac{\ell_1\cdots\ell_s}{\ell_i}\ell_i(e)$ nonnegative on $\ell_i$. The reasons for the latter requirement will become clear in a later step.

  4) Similarly, if $k<l$, we can find a real form $q$ of degree $s-1$ such that $c_{kl}:=b_{kl}+qf$ satisfies
  \[
  (c_{kl}.\ell_i)=(b_{kl}+qf.\ell_i) = \sum_{j=1}^{d-2} r_{ij} + \sum_{j\neq i}s_{ij} + t_{ki}+t_{ki}'
  \]
  for some real point $t_{ki}'\in\ell_i$. In fact, we even have that $t_{ki}'=t_{li}$. Indeed, this follows from (\ref{eqn:bonf}) and the following lemma applied to each $\ell_i$.
  \begin{Lemma}
   Let $f\in\R[t]$ be a polynomial of degree two without real zeros. Let $a,b,c\in\R[t]$ be polynomials of degree at most two such that $a$ and $c$ both have a double zero, $ac$ is nonnegative and $b$ vanishes at the zero of $a$. If $ac=b^2 \mod f$, then $b$ vanishes at the zero of $c$ as well.
  \end{Lemma}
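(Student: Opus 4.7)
The plan is to exploit two ingredients: the hypothesis $b(\rho)=0$, which lets us extract a factor of $(t-\rho)^2$ from $ac-b^2$ and then use that $f$, having no real roots, is coprime to $(t-\rho)^2$; and the strict positivity of $f$, which will rule out a nontrivial multiple of $f$ in the end.

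First I would put the data in a normal form. After possibly replacing $f$ by $-f$ (which does not affect the congruence), assume $f>0$ on all of $\R$. Write $a=\alpha(t-\rho)^2$ and $c=\gamma(t-\sigma)^2$, where $\rho,\sigma$ are the double zeros of $a,c$. The hypothesis $ac\ge 0$ forces $\alpha\gamma\ge 0$. The cases $\alpha=0$ or $\gamma=0$ are degenerate: if $a=0$ then $b^2\equiv 0\pmod f$, and since $f$ is irreducible over $\R$ and $\deg b\le 2=\deg f$, either $b=0$ or $b=\nu f$; the latter has no real zero and contradicts $b(\rho)=0$, so $b=0$ and the conclusion is trivial. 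Similarly if $c=0$. So assume $\alpha,\gamma>0$. If $\rho=\sigma$, then $b(\sigma)=b(\rho)=0$ and we are done; so assume $\rho\neq\sigma$. Using $b(\rho)=0$, write $b=(t-\rho)b_1$ with $\deg b_1\le 1$.

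Next I would use coprimality. A direct computation gives
\[
ac-b^2=(t-\rho)^2\bigl[\alpha\gamma(t-\sigma)^2-b_1^2\bigr].
\]
Since $f$ has no real roots, $\gcd(f,(t-\rho)^2)=1$, so the hypothesis $f\mid ac-b^2$ implies $f\mid \alpha\gamma(t-\sigma)^2-b_1^2$. Both sides have degree at most two, hence
\[
\alpha\gamma(t-\sigma)^2-b_1^2=\mu f
\]
for some constant $\mu\in\R$. The goal becomes $\mu=0$: indeed, $\mu=0$ gives $b_1=\pm\sqrt{\alpha\gamma}(t-\sigma)$, whence $b(\sigma)=(\sigma-\rho)b_1(\sigma)=0$.

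The main obstacle—and the only nontrivial step—is ruling out $\mu\neq 0$ by a sign analysis. If $\mu>0$, evaluating at $t=\sigma$ yields $-b_1(\sigma)^2=\mu f(\sigma)>0$, a contradiction. If $\mu<0$, then $b_1^2=\alpha\gamma(t-\sigma)^2+|\mu|f$, and the right-hand side is a polynomial of degree exactly two that is strictly positive on all of $\R$ (as the sum of a nonnegative term and a strictly positive one). However, the square of any real polynomial of degree at most one either has degree strictly less than two (if $b_1$ is constant) or, if $b_1$ is linear, has a real root; neither option is compatible with a strictly positive degree-two polynomial. This contradiction forces $\mu=0$ and completes the proof.
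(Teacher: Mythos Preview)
Your argument is correct. Both you and the paper begin by writing $a=\alpha(t-\rho)^2$, $c=\gamma(t-\sigma)^2$, $b=(t-\rho)b_1$ and cancelling the factor $(t-\rho)^2$ (you via coprimality, the paper by observing that $\R[t]/(f)$ is a field), arriving at $\alpha\gamma(t-\sigma)^2\equiv b_1^2\pmod f$. From here the two proofs diverge. The paper works in $\R[t]/(f)\cong\C$: the quotient $\gamma^2/(\alpha\alpha')$ is a positive real, so its square root in $\C$ is real, yielding $t-\sigma=\pm\lambda\,b_1/\!$(leading coeff) modulo $f$ with $\lambda\in\R$; since $1,t$ are $\R$-linearly independent in $\R[t]/(f)$, the congruence between two real linear polynomials is an actual equality, forcing $b_1(\sigma)=0$. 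You instead write $\alpha\gamma(t-\sigma)^2-b_1^2=\mu f$ and eliminate $\mu\neq 0$ by a sign argument exploiting $f>0$.

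The paper's route is purely algebraic and makes no use of the sign of $f$ beyond its irreducibility; your route is more elementary in that it avoids invoking $\C$ and square roots in a quotient ring, trading this for the positivity of $f$. Both are short and clean. One tiny cosmetic point: when you pass from $\alpha\gamma\ge 0$ and nonzero to ``assume $\alpha,\gamma>0$'', you should say this is without loss of generality (replace $a,c$ by $-a,-c$ if both leading coefficients are negative); this changes nothing in the hypotheses or conclusion.
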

\begin{proof}
Let $a=\alpha(t-\beta)^2$, $c=\alpha'(t-\beta')^2$ and $b=\gamma(t-\beta)(t-\beta'')$ for some $\alpha,\alpha',\beta,\beta',\beta'',\gamma\in\R$ with $\alpha\alpha'\geq0$. We have by assumption $$\alpha\alpha'(t-\beta)^2(t-\beta')^2=\gamma^2(t-\beta)^2(t-\beta'')^2 \mod f.$$ Since $\R[t]/(f)$ is isomorphic to the field of complex numbers, it follows that $$\alpha\alpha'(t-\beta')^2=\gamma^2(t-\beta'')^2 \mod f.$$ If $\gamma\neq0$, then $\alpha\alpha'>0$ and $t-\beta'=\pm \sqrt{\frac{\gamma^2}{\alpha\alpha'}}(t-\beta'')\mod f$. Finally, it follows that $\alpha\alpha'=\gamma$ and that $\beta'=\beta''$ because $1,t\in\R[t]/(f)$ are $\R$-linearly independent.
\end{proof}

  If $k>l$, we let $c_{kl}=c_{lk}$.
  
  5) We now put $c_{1k}=c_{k1}=ha_k$ and consider the matrix $N$ with entries $c_{kl}$, for $k,l=1,\dots,d+s$. By construction, the $2\times 2$-minors
  \[
  c_{11}c_{kl}-c_{1k}c_{1l} = hgc_{kl}-h^2a_ka_l = h(gc_{kl}-ha_ka_l)
  \]
  are divisible by $fh$. Since the first row of $N$ is not divisible by $f$, it follows that all $2\times 2$-minors of $N$ are divisible by $f$. We need to show that all $2\times 2$-minors $c_{kl}c_{k'l'}-c_{kl'}c_{k'l}$
  are also divisible by $h$. Let $u$ be such a minor and fix $i\in\{1,\dots,s\}$. Note that $u$ has degree $2d+2s-2$ and vanishes (with multiplicities) on the $2d+2s-2$ points $2\sum_{j=1}^{d-2} r_{ij}$, $2\sum_{j\neq i}s_{ij}$, $(t_{ki}+t_{k'i}+t_{li}+t_{l'i})$ on $\ell_i$, since both products $c_{kl}c_{k'l'}$ and $c_{kl'}c_{k'l}$ vanish at those points. Since $u$ is divisible by $f$, it also vanishes at $q_i+\ol{q_i}$. Thus $u$ vanishes identically on $\ell_i$ for each $i$, which implies $h|u$.
  
  6) In this step we show that $c_{22}$ interlaces $fh$. This can be done by proving that $c_{22}\cdot \textrm{D}_e(fh)$ is nonnegative on the zero set of $fh$ \cite[Thm. 2.1]{us}.  Here $\textrm{D}_e(fh)$ denotes the derivative of $fh$ in direction $e$. We have $$\textrm{D}_e(fh)=h\cdot \textrm{D}_e f+f\cdot \sum_{i=1}^s\ell_i(e) \frac{\ell_1\cdots\ell_s}{\ell_i}.$$ We can rewrite this modulo $f$ and find 
  $$c_{22}\cdot \textrm{D}_e(fh)=c_{22} \cdot h\cdot \textrm{D}_e f=\frac{h a_2^2}{g} \cdot h\cdot \textrm{D}_e f=\frac{\textrm{D}_e f}{g} h^2 a_2^2\quad ({\rm mod}\ f)$$ 
  by (\ref{eqn:bonf}). This is nonnegative on the zero set of $f$ because both $\textrm{D}_e f$ and $g$ are interlacers. On the other hand, modulo $\ell_i$ we obtain
  $$c_{22}\cdot \textrm{D}_e(fh)=c_{22}\cdot \ell_i(e)\cdot \frac{\ell_1\cdots\ell_s}{\ell_i}\quad ({\rm mod}\ \ell_i) $$ 
  which is nonnegative on the line defined by $\ell_i$ by the choices made in Step $3)$.

  7) Now we proceed as in the usual Dixon process, referring to \cite{PV13} for details: Since all $2\times 2$-minors of the $(d+s)\times (d+s)$-matrix $N$ are divisible by $fh$, its maximal minors are divisible by $(fh)^{d+s-2}$ (see for example \cite[Lemma 4.7]{PV13}). The signed maximal minors of $N$ have degree $(d+s-1)^2$ and are the entries of the adjugate matrix $N^{\rm adj}$. It follows that 
  \[
  M=(fh)^{2-d-s}\cdot N^{\rm adj}
  \]
   has linear entries. Using the familiar identity $NN^{\rm adj}=\det(N)\cdot I_{d+s}$, we conclude $$\det(M)=\gamma\cdot fh$$ 
   for some constant $\gamma\in\R$. It remains to show that $\gamma\neq 0$. Suppose $\gamma=0$, then $\det(M)$ is identically zero, hence so is $\det(N)$. In particular, the matrix $N(e)$ is singular. Let $\lambda\in\R^{d+s}$ be a non-trivial vector in the kernel of $N(e)$ and consider the polynomial $\wt g=\lambda^T N{\lambda}$. It follows from the linear independence of the entries of the first row of $N$ that $\wt g$ is not the zero polynomial \cite[Lemma 4.8]{PV13}. Since $c_{22}$ interlaces $fh$ by (6), so does $\wt g$ \cite[Thm.~3.3, (1)$\Rightarrow$(2)]{PV13}, contradicting $\wt g(e)=0$. That $M(e)$ is definite also follows from the fact that $c_{22}$ interlaces $fh$, by \cite[Thm.~3.3, (2)$\Rightarrow$(3)]{PV13}. Note that the result in \cite{PV13} is stated only for irreducible curves. However, the same argument will apply here, since we have shown that $c_{22}$ is coprime to $fh$ (unlike $c_{11}$, which is divisible by $h$).

\bigskip
\noindent This finishes the construction of the determinantal representation $M$ of $fh$. Finally, we note that the spectrahedron $\mathcal{S}(M)$ coincides with the hyperbolicity region $C(f,e)$ of $f$. Since $\det(M)=f\cdot\ell_1\cdots\ell_s$, this simply amounts to the fact that the lines $\ell_1,\dots,\ell_s$ do not pass through $C(f,e)$. Indeed, each $\ell_j$ has two non-real intersection points with $C$, while lines passing through the hyperbolicity region will meet $C$ in only real points. This completes the proof of Theorem \ref{Thm:GenDixon}.
\bigskip

\begin{Remark}
 Clearly, the corank of the constructed matrix pencil $M$ is at least one at each point where $fh$ vanishes. It can have corank more than one only at singularities of $fh$, i.e.~in our case the points where two components intersect. Since the adjugate $N=M^{\rm adj}$ vanishes identically at the points $r_{ij}$ and $s_{ij}$ and because these are ordinary nodes, the corank of $M$ at these points is exactly two. On the other hand, we have constructed $N$ in such a way that it is not entirely zero at the points $q_j$ and $\ol{q_j}$. Thus $M$ has corank one at these points. This shows in particular that $M$ is not equivalent to a block diagonal matrix with more than one block.
\end{Remark}

\begin{Remark}
 The vector space $V$ in Step $1)$ of our construction can be found without computing all the real contact points $p_1,\ldots,p_r$. Indeed, by genericity assumption (G1) the $q_i, \ol{q_i}$ are all simple intersection points. Therefore, the $p_i$ can be computed as the singular locus of the zero dimensional scheme cut out by $f$ and $g$ via the Jacobian criterion.
\end{Remark}

Next we observe that the genericity assumption in the theorem, as well as the smoothness assumption on $f$, can be dropped for strict interlacers by applying a limit argument.

\begin{Cor}\label{Kor:GenDixon}
  Let $f$ be a real form of degree $d$ that is hyperbolic with respect to $e\in\P^2(\R)$, and let $g$ be a strict interlacer of $f$. Then there exists a symmetric linear matrix
  pencil $M$ of size $\frac{d^2+d}{2}$ which is definite at $e$ and such
  that $C(f,e)=\sS(M)$. We can
  choose $M$ in such a way that $g$ divides a principal minor of $M$ and $\det(M)/f$ is a product of $\frac{d^2-d}{2}$ linear
  forms.
\end{Cor}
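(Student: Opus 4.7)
The plan is to remove the genericity assumptions (G1)--(G3) of Theorem~\ref{Thm:GenDixon} by perturbation and compactness. I first observe that the smoothness and irreducibility hypotheses on $f$ are automatic when $g$ is a strict interlacer: strict interlacing along every real line through $e$ forces $f$ to have only simple roots on each such line, so $\sV_\C(f)$ has no singularities outside $e$, hence is smooth everywhere (as $f(e)\neq 0$) and also irreducible, since any two distinct components would meet at a singular point. Strict interlacing likewise rules out real contact points ($r=0$).

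By the lemma preceding Theorem~\ref{Thm:GenDixon}, strict interlacers of $f$ satisfying (G1)--(G3) are dense in the set of all strict interlacers, so choose a sequence $g_n \to g$ with each $g_n$ meeting these conditions. Theorem~\ref{Thm:GenDixon} applied to $(f, g_n)$ yields a symmetric pencil $M_n$ of size $m = (d^2+d)/2$, positive definite at $e$, with $\det(M_n) = f h_n$ for $h_n = \ell_1^{(n)} \cdots \ell_s^{(n)}$ a product of $s = (d^2-d)/2$ real linear forms, $g_n$ dividing the $(1,1)$-minor, and $\sS(M_n) = C(f,e)$. After rescaling so that each $\ell_i^{(n)}(e) = 1$, the $\ell_i^{(n)}$ stay bounded: they are the real lines through the pairs of non-real intersection points of $g_n$ and $f$, which converge to those of $g$ and $f$. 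I then conjugate $M_n$ by a block triangular matrix $U_n = \bigl(\begin{smallmatrix}\alpha_n & 0 \\ v_n & V_n\end{smallmatrix}\bigr)$ chosen so that $U_n^T M_n(e)\, U_n = I$. A direct computation shows that the bottom-right block of $U_n^T M_n U_n$ is the congruence $V_n^T M_n' V_n$ of the bottom-right block $M_n'$ of $M_n$, so the $(1,1)$-minor is rescaled only by $\det(V_n)^2 \neq 0$ and remains divisible by $g_n$. In coordinates with $e = (0,0,1)$, the conjugated pencil reads $xA_n + yB_n + zI$, and the boundedness of the coefficients of $\det(xA_n + yB_n + zI) = fh_n$ (applied at $y=0$) forces the characteristic polynomial of $A_n$, and hence the eigenvalues and operator norms of $A_n$ and similarly $B_n$, to remain bounded. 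A convergent subsequence yields $M_{n_k} \to M$.

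The limit $M$ satisfies $M(e) = I \succ 0$, $\det(M) = fh$ with $h = \lim h_{n_k}$ again a product of $s$ real linear forms, and $g$ divides the $(1,1)$-minor of $M$ by continuity of polynomial division by the nonzero limit $g$. For the spectrahedron, the inclusion $C(f,e) = \sS(M_{n_k}) \subseteq \sS(M)$ follows from closedness of the PSD cone under limits; conversely, any $a \in \interior\sS(M)$ satisfies $M(a) \succ 0$, hence $M_{n_k}(a) \succ 0$ for large $k$, placing $a \in \sS(M_{n_k}) = C(f,e)$, and taking closures using $\sS(M) = \overline{\interior\sS(M)}$ (valid since $M(e)$ is definite) yields $\sS(M) \subseteq C(f,e)$. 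The main obstacle is the normalization in the compactness step: one needs a conjugation that places all $M_n(e)$ in a common definite form, thereby bounding the pencil coefficients, without destroying the divisibility of a fixed principal minor by $g_n$. The block triangular conjugation above is what enables this, because it acts on the principal bottom-right block merely by a nonsingular congruence, leaving the structure required by the statement intact under the limit.
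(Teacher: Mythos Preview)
Your overall strategy---perturb to satisfy (G1)--(G3), apply Theorem~\ref{Thm:GenDixon}, and pass to a limit using compactness of normalized monic pencils---is essentially the paper's approach. The paper streamlines the compactness step by invoking the known fact that the determinant map on monic symmetric pencils is proper (\cite[Lemma~3.4]{PV13}), whereas you reprove a special case by hand via block-triangular Cholesky conjugation; your version has the merit of making explicit why divisibility of the $(1,1)$-minor by $g_n$ survives the normalization.

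However, your opening paragraph contains a genuine error that breaks the proof. You assert that a strict interlacer forces $\sV_\C(f)$ to be smooth and $f$ to be irreducible, arguing that simple roots on every \emph{real} line through $e$ preclude singularities. This only rules out \emph{real} singular points; complex singularities are invisible to real lines. For a concrete counterexample, take $f$ to be the product of two nested ellipses, say $f=(x^2+y^2-z^2)(x^2+y^2-4z^2)$, hyperbolic with respect to $e=(0:0:1)$. Any ellipse between the two ovals is a strict interlacer, yet $\sV_\C(f)$ is singular at the four complex intersection points $(1:\pm i:0)$ (with multiplicity), and $f$ is reducible. Thus Theorem~\ref{Thm:GenDixon} cannot be applied to $f$ directly.

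The paper repairs this by perturbing \emph{both} $f$ and $g$: it approximates the pair $(f,g)$ by pairs $(\tilde f_n,\tilde g_n)$ with $\tilde f_n$ smooth and irreducible and $\tilde g_n$ a strict interlacer of $\tilde f_n$ satisfying (G1)--(G3), obtains monic representations $\tilde M_n$ of $\tilde f_n$, and passes to the limit. Once you perturb $f$ as well, your compactness argument goes through with only cosmetic changes (the determinant becomes $\tilde f_n h_n$ rather than $f h_n$, but both factors still converge).
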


\begin{proof}
	Let $m=\frac{d^2+d}{2}$. We may assume that $f(e)=1$ and consider only monic representations $f=\det(M)$, i.e.~with $M(e)=I_m$. The determinant map taking a monic symmetric real linear matrix pencil of size $m\times m$ to its determinant is proper, hence its image is closed (see for example \cite[Lemma 3.4]{PV13}). If $g$ is a strict interlacer of $f$, the pair $(f,g)$ is in the closure of the set of pairs $(\wt f,\wt g)$, where $\wt f$ is hyperbolic with respect to $e$, $\sV(\wt f)$ is smooth, and $\wt g$ is a strict interlacer of $\wt f$ satisfying the genericity assumptions (G1)--(G3). Therefore, there exists a sequence $(\wt f_n,\wt g_n)$ converging to $(f,g)$ together with representations $\wt f_n=\det(\wt M_n)$ with $\wt g_n$ dividing the first principal minor of $\wt M$ and $\det(\wt M)/\wt f$ a product of $m-d$ linear forms, by Theorem \ref{Thm:GenDixon}. The sequence $\wt M_n$ then has a subsequence converging to a matrix pencil $M$, which is the desired determinantal representation of $f$.
\end{proof}

\begin{Remark}
	The procedure of approximating a given hyperbolic form together with an interlacer as in the proof above may be difficult to carry out in practice. However, the generalized Dixon process can often be applied (with small modifications if needed) even when the genericity assumptions fail.
\end{Remark}

As a further consequence, we can prove the following rationality result.

\begin{Thm}\label{thm:rationalspec}
 Let $f\in\Q[x,y,z]_d$ be a polynomial hyperbolic with respect to $e\in\R^3$ whose real projective zero set is smooth. Then its hyperbolicity cone is of the form $$\{(x,y,z)\in\R^3:\, xA+yB+zC\succeq0\}$$where $A,B,C$ are symmetric matrices with rational entries.
\end{Thm}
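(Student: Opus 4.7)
The plan is to apply the generalized Dixon process of Corollary~\ref{Kor:GenDixon} starting from an interlacer defined over $\Q$, obtaining a spectrahedral representation over a finite real extension $K/\Q$, and then to descend this representation to $\Q$ by a Galois averaging construction.

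First, I would choose a strict interlacer $g\in\Q[x,y,z]_{d-1}$ of $f$ satisfying the genericity assumptions (G1)--(G3). Such a $g$ exists because the cone of strict interlacers of $f$ is a non-empty open convex subset of $\R[x,y,z]_{d-1}$ defined by strict polynomial inequalities over $\Q$, so its $\Q$-points are dense and a generic such $g$ will satisfy (G1)--(G3). I then run the construction in the proof of Theorem~\ref{Thm:GenDixon} with the pair $(f,g)$. Since $r=0$, the space $V$ of Step~$1$ equals $\Q[x,y,z]_{d-1}$ and the auxiliary forms $a_i$ may be chosen rational. The remaining data---the lines $\ell_i$ through complex conjugate pairs of intersection points, the intersection points $s_{ij}$, and the constants $\alpha_{ij},\alpha_i$---all lie in a finite real extension $K/\Q$, namely the splitting field of $\sV(f)\cap\sV(g)$ adjoined with the square roots used to pick double zeros. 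The result is a symmetric linear matrix pencil $M\in\mathrm{Sym}^m(K[x,y,z]_1)$, $m=\binom{d+1}{2}$, satisfying $\sS(M)=C(f,e)$.

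Next, I would descend $M$ to a rational pencil. After replacing $K$ by its Galois closure in $\R$ if necessary, write $G=\mathrm{Gal}(K/\Q)$, $n=|G|$, and fix a $\Q$-basis $\beta_1,\ldots,\beta_n$ of $K$. Form the symmetric rational pencil $M'\in\mathrm{Sym}^{nm}(\Q[x,y,z]_1)$ whose $(i,j)$-block equals $\mathrm{Tr}_{K/\Q}(\beta_i\beta_j M)$. Using the Vandermonde-type matrix $U_{\sigma,i}=\sigma(\beta_i)\in\mathrm{GL}_n(K)$, one verifies that $M'$ is $K$-congruent to the block-diagonal pencil $\bigoplus_{\sigma\in G}M^\sigma$, hence $\sS(M')=\bigcap_{\sigma\in G}\sS(M^\sigma)$. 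The inclusion $\sS(M')\subseteq\sS(M)=C(f,e)$ is automatic since $M$ appears as a block of this direct sum.

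The main obstacle is the reverse inclusion $C(f,e)\subseteq\sS(M^\sigma)$ for every $\sigma\in G$, which amounts to showing both that $M^\sigma(e)$ is positive definite and that the Galois-conjugate linear factors of the extra factor $h$ still avoid the hyperbolicity region. Neither property is preserved tautologically: positive definiteness is not Galois-invariant (the real ordering is not algebraic, as already seen from $1+\sqrt2$ vs.\ $1-\sqrt2$), and a Galois conjugate of a line with non-real intersection with $\sV(f)$ could in principle acquire only real intersections under a different embedding and so pass through $C(f,e)$. Closing this gap---perhaps by descending only over a well-chosen Galois-stable subset of embeddings, enlarging the pencil via additional Dixon outputs, or directly modifying the Dixon construction so that all choices can be made Galois-equivariantly---is the technical heart of the argument.
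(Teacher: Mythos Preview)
Your approach has a genuine gap, which you yourself identify at the end: the block-diagonalization $M'\sim\bigoplus_\sigma M^\sigma$ gives $\sS(M')=\bigcap_\sigma\sS(M^\sigma)$, and there is no reason for $\sS(M^\sigma)$ to contain $C(f,e)$ when $\sigma\neq\id$. Positive definiteness of $M(e)$ is a real-analytic condition, not an algebraic one, so it is simply not Galois-stable; likewise the condition that a line $\ell_i$ meet $\sV(f)$ in a non-real point depends on the real embedding. Your suggested fixes (restricting to a Galois-stable set of embeddings, or making all choices in the Dixon process equivariantly) do not obviously work either: the choice of the $\alpha_i$ in Step~3 involves picking the correct sign of a square root to make a certain product nonnegative, and this sign will jump under Galois action for exactly the same reason.

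The paper circumvents this entirely by a much more elementary density argument. The key observation is that the output of the generalized Dixon process satisfies a \emph{linear} identity over $\Q$: if $m$ is the vector of monomials of degree $d-1$, then the equation
\[
(xA+yB+zC)\cdot m = f\cdot v
\]
is a $\Q$-linear system in the entries of $A,B,C,v$. The Dixon construction furnishes one real solution with $M(e)\succ 0$ and with $\det M/f$ a product of lines missing $C(f,e)$. Since the rational points of a $\Q$-defined affine subspace are dense in its real points, one perturbs to a nearby rational solution $(A,B,C,v)$; positive definiteness of $M(e)$ and avoidance of $C(f,e)$ by the cofactor are open conditions and survive the perturbation, while the linear identity forces $f\mid\det M$ automatically (the equation exhibits a nonzero kernel vector of $M$ at every zero of $f$). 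No Galois theory is needed, and one never has to control what happens to the individual lines $\ell_i$ under conjugation.
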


\begin{proof}
 Let $m\in\Q[x,y,z]_{d-1}^N$ be the vector of all monomials of degree $d-1$ and let $N=\binom{d+1}{2}$. The equation \begin{align}\label{eqn:rat} (xA+yB+zC)\cdot m=f\cdot v\end{align} poses linear conditions on the entries of the symmetric $N\times N$ matrices $A,B,C$ and on the entries of $v\in\R^N$. These linear conditions are defined over the rational numbers. Applying the above construction gives a solution to this system of linear equations with $e_0 A+e_1 B+e_2 C$ positive definite and $\det(xA+yB+zC)=h\cdot f$ where $h$ is a product of linear forms whose zero set does not intersect the hyperbolicity cone of $f$. Since the rational solutions to (\ref{eqn:rat}) are dense in the solution set over the real numbers, we can find rational matrices $A,B,C$ satisfying (\ref{eqn:rat}) with $e_0 A+e_1 B+e_2 C$ being positive definite, as well. Then $\det(xA+yB+zC)$ is not the zero polynomial and is divisible by $f$, since the pencil has a nonzero kernel vector whenever $f$ vanishes at $(x,y,z)$ by (\ref{eqn:rat}). If $A,B,C$ are chosen close enough to our original solution, the other factor of $\det(xA+yB+zC)$ will not intersect the hyperbolicity cone of $f$ either.
\end{proof}

The next example shows that the smallest size of a rational spectrahedral representation is in general larger than the degree of the curve.

\begin{Example}
\label{ex:rational}
  Consider the univariate polynomial $p=x^3 - 6 x - 3\in\Q[x]$. It has three distinct real zeros but is irreducible over the rational numbers by Eisenstein's criterion. The plane elliptic curve defined by $y^2=p(x)$ is hyperbolic. Its hyperbolicity cone has the following spectrahedral representation with rational $4\times4$ matrices: $$\{(x,y,z)\in\R^3:\,\,\begin{pmatrix} 3 z& y& -x - z& -3 x + z\\y& -x + 2 z& 0& -y\\-x - z&  0& z& x + 4 z\\-3 x + z& -y& x + 4 z& -x + 18 z \end{pmatrix}\succeq0\}. $$ This was obtained by applying our construction to the interlacer $y^2 + 3 x z + z^2$ with two real contact points (Figure \ref{fig:rational}).

\begin{figure}[!ht]
\centering
\includegraphics[width=8cm,height=5cm]{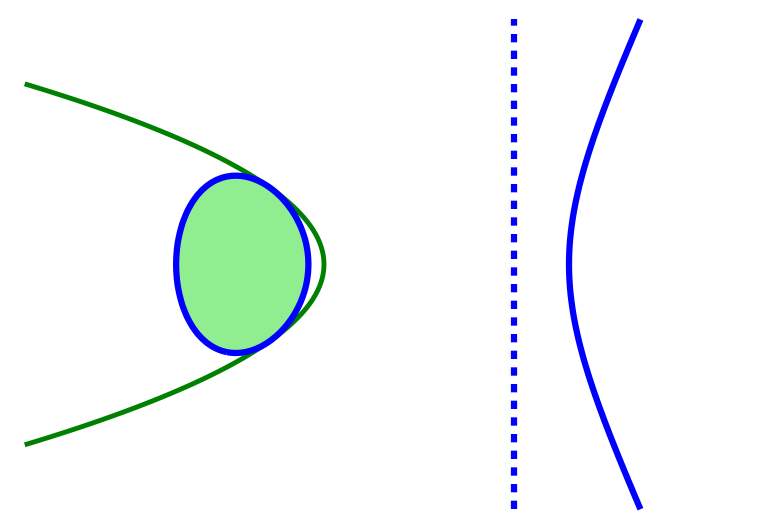}
\caption{Hyperbolic cubic (in blue), an interlacer touching in 2 real points (in green) and the linear factor (dashed in blue)}
\label{fig:rational}
\end{figure}

  It also has a $3\times3$ spectrahedral representation with real matrices by the Helton--Vinnikov Theorem. It does, however, not have such a representation with rational $3\times3$ matrices. Indeed, any such representation would yield a contact interlacer defined over the rational numbers by taking some principal $2\times2$ minor. This interlacer would give rise to a divisor $D$ defined over the rational numbers with $2D=6P_\infty$ where $P_\infty$ is the point of the curve at infinity. Thus $D-3P_\infty$ would be an even theta characteristic defined over the rationals. On the other hand, the three even theta characteristics of the curve are given by $P_i-P_\infty$ for $P_1,P_2,P_3$ the three intersection points of the curve with the $x$-axis. These are clearly not defined over the rationals.
\end{Example}

\subsection{B\'ezout matrices}\label{sec:bez}
Let $f, g \in \R[t]$ be two univariate polynomials having degrees $\deg(f)=d$ and $\deg(g) = d-1$. The \textit{B\'ezout matrix} of $f$ and $g$ is defined as follows. We write \[\frac{f(s)g(t)-f(t)g(s)}{s-t}=\sum_{i,j=1}^d b_{ij} s^{i-1} t^{j-1} \] for some real numbers $b_{ij}$. Then the B\'ezout matrix is defined as $\textnormal{B}(f,g)=(b_{ij})_{ij}$. Note that $\textnormal{B}(f,g)$ is always a real symmetric matrix. The B\'ezout matrix can be used to detect the properties of being real-rooted and interlacing.

\begin{Thm}[see \S2.2 of \cite{Krein81}]{\label{thm:bezuni}}
 Let $f,g \in \R[t]$ be univariate polynomials with $d=\deg(f)=\deg(g)+1$. Then the following are equivalent:
 \begin{enumerate}[(i)]
  \item The B\'ezout matrix $B(f,g)$ is positive semidefinite.
  \item The polynomial $g$ interlaces $f$.
 \end{enumerate}
 Furthermore, the B\'ezout matrix has full rank if and only if $f$ and $g$ have no common zero.
\end{Thm}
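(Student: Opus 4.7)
The plan is to diagonalize the Bezoutian bilinear form in the Lagrange basis attached to the zeros of $f$; both the positivity and the rank statements then reduce to an elementary sign analysis. I would first treat the case in which $f$ has pairwise distinct real zeros $\alpha_1<\cdots<\alpha_d$, and then recover the general case via a perturbation argument using continuity of $B(f,g)$ in the coefficients and closedness of the PSD cone.

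The central step is the Lagrange decomposition of the Bezoutian kernel,
\begin{equation*}
\frac{f(s)g(t)-f(t)g(s)}{s-t}=\sum_{i=1}^d f'(\alpha_i)\,g(\alpha_i)\,L_i(s)\,L_i(t),
\end{equation*}
where $L_i(t)=\prod_{j\ne i}(t-\alpha_j)/(\alpha_i-\alpha_j)$ is the $i$-th Lagrange interpolation polynomial. To verify it I would substitute $s=\alpha_j$: the left side becomes $g(\alpha_j)f(t)/(t-\alpha_j)=f'(\alpha_j)g(\alpha_j)L_j(t)$, using $f(t)/(t-\alpha_j)=f'(\alpha_j)L_j(t)$, while the right side collapses to the same expression via $L_i(\alpha_j)=\delta_{ij}$. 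Since both sides are polynomials of degree at most $d-1$ in $s$ and agree at $d$ points, they coincide. Reading off the coefficients of $s^{i-1}t^{j-1}$ translates the identity into the matrix factorisation $B(f,g)=V^T D V$, where $V_{ij}=\alpha_i^{j-1}$ is the invertible Vandermonde matrix and $D=\mathrm{diag}(f'(\alpha_i)g(\alpha_i))$. In particular $B(f,g)$ is congruent to $D$, so its rank equals the number of nonzero diagonal entries, and since $f'(\alpha_i)\ne 0$ this is the number of $\alpha_i$ with $g(\alpha_i)\ne 0$, yielding the full-rank statement. For the positivity equivalence, observe that $f'(\alpha_i)$ strictly alternates in sign with $i$ because $f$ changes sign at every simple zero; after normalising the sign of $g$, interlacing is precisely the statement that $g(\alpha_i)$ alternates in the matching pattern, so all products $f'(\alpha_i)g(\alpha_i)$ are nonnegative and $D\succeq 0$. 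Conversely, $D\succeq 0$ forces the $g(\alpha_i)$ to alternate in sign, and the intermediate value theorem supplies a zero of $g$ in each interval $(\alpha_i,\alpha_{i+1})$; a degree count shows these are all the zeros of $g$, which gives interlacing.

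The main obstacles are not any single calculation but rather the treatment of the degenerate features. When $f$ has a multiple zero $\alpha$, the Lagrange basis collapses, but in this case interlacing forces $g(\alpha)=0$ as well, and one can perturb $f$ to a nearby $f_\varepsilon$ with simple real zeros together with an interlacer $g_\varepsilon\to g$ and pass to the limit. To get the direction $(i)\Rightarrow(ii)$ without assuming a priori that $f$ has only real zeros, I would invoke Hermite's classical signature theorem for Bezoutians, which expresses the signature of $B(f,g)$ in terms of the signs of $g$ at the real zeros of $f$ and shows that positive semidefiniteness forces $f$ to have $d$ real zeros whenever $B(f,g)$ has full rank. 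A sign convention is also needed: under the paper's unsigned definition of interlacing both $g$ and $-g$ interlace $f$, whereas $B(f,-g)=-B(f,g)$, so the theorem must be read with a normalisation on the leading coefficient of $g$ (matched to that of $f$).
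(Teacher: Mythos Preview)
The paper does not give a proof of this theorem at all: it is quoted as a classical result and attributed to Kre\u{\i}n--Na\u{\i}mark \cite{Krein81}, \S2.2, so there is nothing to compare your argument against on the paper's side. Your approach via Lagrange diagonalisation of the Bezoutian is the standard one and is essentially correct in the generic case of simple real zeros.

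A couple of small points. Your matrix identity is stated as $B(f,g)=V^TDV$ with $V_{ij}=\alpha_i^{j-1}$, but what the Lagrange expansion actually gives is $B(f,g)=L^TDL$ with $L$ the coefficient matrix of the Lagrange basis, i.e.\ $L=(V^T)^{-1}$; equivalently $VB(f,g)V^T=D$. This does not affect the congruence argument, but it is worth writing the correct version. Second, your observation about the sign convention is well taken: under the paper's definition of interlacing, $g$ and $-g$ interlace $f$ simultaneously, while $B(f,-g)=-B(f,g)$; so the equivalence as stated really requires a normalisation (matching leading signs), or should be read as ``$B(f,g)$ is semidefinite''. Finally, your treatment of the degenerate cases (multiple zeros of $f$, or $f$ not a priori real-rooted) by perturbation and by appeal to Hermite's signature theorem is reasonable but sketchy; if you want a self-contained proof you should spell out why positive semidefiniteness of $B(f,g)$ forces all zeros of $f$ to be real, e.g.\ by computing the contribution of a complex-conjugate pair to the signature, rather than invoking Hermite as a black box.
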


In the multivariate case we can proceed analogously. Let $f,g\in\R[x_0,\ldots,x_n]$ be homogeneous polynomials of degrees $d$ and $d-1$ respectively. We assume that $f$ and $g$ do not vanish at $e=(1,0,\ldots,0)$. Then, writing $x=(x_1,\ldots,x_n)$, we have \[\frac{f(s,x)g(t,x)-f(t,x)g(s,x)}{s-t}=\sum_{i,j=1}^d b_{ij} s^{i-1} t^{j-1} \] for some homogeneous polynomials $b_{ij}\in\R[x_1,\ldots,x_n]$ of degree $2d-(i+j)$. Again, we define the B\'ezout matrix as $\textnormal{B}(f,g)=(b_{ij})_{ij}$. It follows from the above theorem that $\textnormal{B}(f,g)$ is positive definite for every $0\neq x\in\R^n$ if and only if $f$ is hyperbolic with respect to $e$ and $g$ is a strict interlacer of $f$.

\begin{Remark}\label{rem:wron}
 The B\'ezout matrix $\textnormal{B}(f,g)$ is closely related to the \textit{Wronskian polynomial} $\textnormal{W}(f,g)=\textnormal{D}_ef \cdot g-f\cdot\textnormal{D}_eg$. Namely, if we let $w=(1,x_0,\ldots,x_0^{d-1})^t$, then $\textnormal{W}(f,g)=w^t\cdot\textnormal{B}(f,g)\cdot w$. Indeed, by the definition of the B\'ezout matrix the right-hand side equals $$\lim_{s\to t}\biggl( \frac{f(s,x)g(t,x)-f(t,x)g(s,x)}{s-t}\biggr)=\textnormal{W}(f,g).$$ We also note that for square-free polynomials $f$ the polynomial $g$ of degree $\deg(f)-1$ is uniquely determined by $\textnormal{W}(f,g)$.
\end{Remark}

We can use the Wronskian polynomial $\textnormal{W}(f,g)$ to describe the set
${\rm Int}(f,e)$ of interlacers of $f$ in direction $e$, which is a convex cone.
By \cite[Cor.2.7]{us}, ${\rm Int}(f,e)$ can be represented as a linear image of
a section of the cone of positive polynomials of degree $2d-2$, where $d=\deg f$:
\[
{\rm Int}(f,e)=\bigl\{g\in\R[x,y,z]_{d-1} \suchthat \textnormal{W}(f,g) \geq 0\bigr\}.
\]
Whenever $\textnormal{W}(f,g)$ is a sum of squares, the cone ${\rm Int}(f,e)$ can be sampled
by solving a linear matrix inequality as shown in the following example.

\begin{Example}
  The cubic $f=x^3+2x^2y-xy^2-2y^3-xz^2$ is hyperbolic with respect
  to $e=(1,0,0)$, and $C(f,e)$ is the green region in
  Figure \ref{expExact}.

  \begin{figure}[ht!]
  \centering
  \begin{tabular}{c}
    \includegraphics[width=8cm]{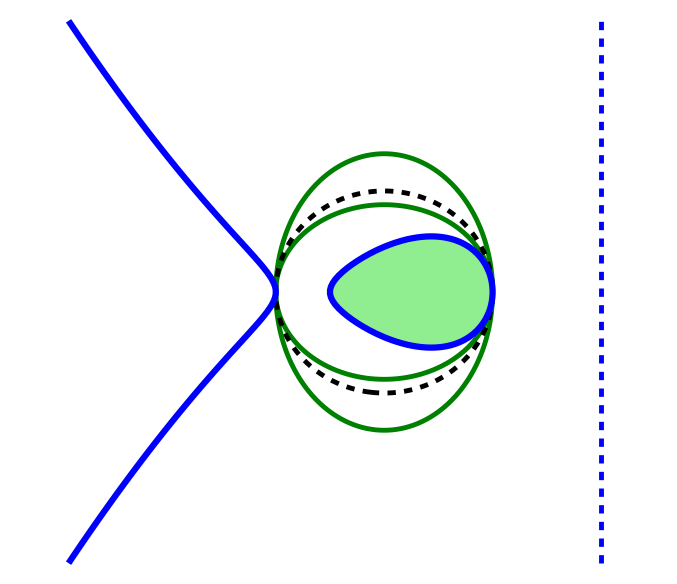}
  \end{tabular}
  \caption{A cubic hyperbolic curve (in blue) with three interlacers, one defined over $\Q$ (in dashed black) and two over an extension of degree $4$ (in green). The dashed blue line is the extra factor in the determinantal representation.}
  \label{expExact}
  \end{figure}

  Let $g = x^2 + g_{110}xy+g_{101}xz+g_{020}y^2+g_{011}yz+g_{002}z^2$
  be a generic quadratic form such that $g(e)=1$. 
  The Wronskian of $f,g$ in direction $e$ is the ternary quartic
  \begin{align*}
  \textnormal{W}(f,g) & = 2g_{110}x^3y+2g_{110}x^2y^2+2g_{110}y^4+2g_{101}x^3z+2g_{101}x^2yz+2g_{101}y^3z+ \\
& +3g_{020}x^2y^2+ 4g_{020}xy^3-g_{020}y^4-g_{020}y^2z^2+3g_{011}x^2yz+4g_{011}xy^2z- \\
  & -g_{011}y^3z-g_{011}yz^3+3g_{002}x^2z^2+ 4g_{002}xyz^2-g_{002}y^2z^2-g_{002}z^4+ \\
  & +x^4+x^2y^2+x^2z^2+4xy^3.
  \end{align*}
  Let $G=(G_{ij})$ be a symmetric $6 \times 6$ matrix of unknowns, and consider the linear
  system $\textnormal{W}(f,g) = m^t \cdot G \cdot m$, where $m$ is the vector of
  monomials of degree 2 in $x,y,z$. We obtain that $G$ (the {\it Gram matrix} of
  $\textnormal{W}(f,g)$, cf. \cite{powers1998algorithm}) has the form
\begin{center}
\label{gramCubic}
$G=$
\scalebox{0.65}{
$
\left[
\begin{array}{cccccc}
1 & g_{110} & g_{101} & G_{14} & g_{101}-G_{23}+\frac{3}{2}g_{011} & G_{16} \\
g_{110} & 3 g_{020} + 2 g_{110} + 1 - 2 G_{14} & G_{23} & 2g_{020}+2 & -G_{34}+2g_{011} & -G_{35}+2g_{002} \\
g_{101} &  G_{23} & 1+3g_{002}-2G_{16} & G_{34} & G_{35} & 0 \\
G_{14} & 2g_{020}+2 & G_{34} & 2g_{110}-g_{020} & g_{101}-\frac{1}{2}g_{011} & G_{46} \\
g_{101}-G_{23}+\frac{3}{2}g_{011} & 2g_{011}-G_{34} & G_{35} & g_{101}-\frac{1}{2}g_{011} & -2G_{46}-g_{020}-g_{002} & -\frac{1}{2}g_{011} \\
G_{16} & 2g_{002}-G_{35} & 0 & G_{46} & -\frac{1}{2}g_{011} & -g_{002}
\end{array}
\right]
$
}
\end{center}
Let $p_1=(1,1,0)$ and $p_2=(1,-1,0)$. Interlacers in ${\rm Int}(f,e)$ vanishing in $p_1$ and $p_2$ can be computed through the quantified linear matrix inequality
\begin{equation}
  \label{quantLMI}
\exists \, G_{ij} \,\,\,\,: \,\, g(p_1)=g(p_2)=0, \,\,\,\, G \succeq 0.
\end{equation}
Solving \eqref{quantLMI} symbolically using \cite{henrion2017spectra} yields the following
parametrization of an interlacer: $g = x^2-y^2+t\cdot z^2$, where $t$ is any of the two real
roots $t_1,t_2$ of $q(t)=49t^4-20t^3+22t^2+12t+1$ (the green curves in Figure \ref{expExact}).

Since the matrices $G$ corresponding to the two interlacers have rank $2$, the corresponding
Wronskian polynomials are sums of two squares. Choosing a rational $t_1 < r < t_2$ gives a
rational interlacer, for instance $g = x^2-y^2-\frac{1}{5}z^2$.

As in Example \ref{ex:rational}, our construction yields rational $4 \times 4$ determinantal
representations of $f$ times a rational linear polynomial, that can be built from the interlacer
$g = x^2-y^2-\frac{1}{5}z^2$:
$$
\frac{24}{125} f \cdot \left({2}x-{}y\right) =
\det
\begin{pmatrix}
  5x+10y & -x-2y & -4z & 2z \\
  -x-2y & x & 0 & 0 \\
  -4z & 0 & 4x+2y & -2x-4y \\
  2z & 0 & -2x-4y & 4x+2y
\end{pmatrix}.
$$
The matrix on the right hand side of the previous equality gives a spectrahedral representation of $C(f,e)$ (the green region in Figure \ref{expExact}).
\end{Example}

In the following, we show how our construction gives a \textit{sum-of-squares decomposition}, i.e. a representation $B(f,g)=S^t S$ for some (not necessarily square) matrix $S$ with polynomial entries, for any curve $f$ hyperbolic with respect to $(1,0,0)$ and any strict interlacer $g$.

 We have seen that there is a basis $a_1,\ldots,a_N$ of $\R[x,y,z]_{d-1}$ with $a_1=g$ and real symmetric matrices $A,B,C$ of size $N$ such that $A$ is positive definite and \begin{align}\label{eqn:bez}(xA+yB+zC)\cdot a = \delta_1\cdot f\end{align} where $a=(a_1,\ldots,a_N)^t$ and $\delta_1\in\R^N$ is the first unit vector. Let us write $a=a_{0}x^{d-1}+\ldots+a_{d-1}$ for some $a_i\in\R[y,z]_i^N$ and let $S$ be the matrix with columns $a_d,\ldots,a_0$. We claim that $\textnormal{B}(f,g)=S^T A S$, which implies that $\textnormal{B}(f,g)$ is in the interior of the sums-of-squares cone. Indeed, by \cite[\S 3]{DetBez}, we have that $\textnormal{B}(f,\tilde{g})=S^T A S$ for some $\tilde{g}\in\R[x,y,z]_{d-1}$. Furthermore, taking the derivative of (\ref{eqn:bez}) yields:
 $$A\cdot a+(xA+yB+zC)\cdot \textnormal{D}_e a = \delta_1\cdot \textnormal{D}_e f.$$ Now it follows by multiplying with $a^t$ from the left and another application of (\ref{eqn:bez}) that $$a^t\cdot A\cdot a+f\cdot \delta_1^t\cdot \textnormal{D}_e a=a^t\cdot A\cdot a+a^t\cdot(xA+yB+zC)\cdot \textnormal{D}_e a =a^t\cdot \delta_1\cdot \textnormal{D}_e f.$$Thus by Remark \ref{rem:wron} $$\textnormal{W}(f,\tilde{g})+f\cdot \textnormal{D}_e g=g\cdot \textnormal{D}_e f \Rightarrow \textnormal{W}(f,\tilde{g})=\textnormal{W}(f,{g})$$ which implies $g=\tilde{g}$ since $f$ is square-free.
 
\begin{Remark}
 It has been shown in \cite{DetBez} that sum-of-squares representations of a B\'ezout matrix of a hyperbolic polynomial $f$ as above give rise to a definite determinantal representation of some multiple of $f$. Now we have seen that for every strict interlacer of a hyperbolic curve there is a sum-of-squares decomposition of the corresponding B\'ezout matrix which even gives rise to a spectrahedral representation of the hyperbolicity cone.
\end{Remark}

\def\cprime{$'$}
\begin{bibdiv}
\begin{biblist}

\bib{arba}{book}{
      author={Arbarello, E.},
      author={Cornalba, M.},
      author={Griffiths, P.~A.},
      author={Harris, J.},
       title={Geometry of algebraic curves. {V}ol. {I}},
      series={Grundlehren der Mathematischen Wissenschaften [Fundamental
  Principles of Mathematical Sciences]},
   publisher={Springer-Verlag, New York},
        date={1985},
      volume={267},
        ISBN={0-387-90997-4},
         url={https://doi.org/10.1007/978-1-4757-5323-3},
      review={\MR{770932}},
}

\bib{beauville}{article}{
      author={Beauville, Arnaud},
       title={Determinantal hypersurfaces},
        date={2000},
        ISSN={0026-2285},
     journal={Michigan Math. J.},
      volume={48},
       pages={39\ndash 64},
         url={http://dx.doi.org/10.1307/mmj/1030132707},
        note={Dedicated to William Fulton on the occasion of his 60th
  birthday},
      review={\MR{1786479 (2002b:14060)}},
}

\bib{dixon1902note}{inproceedings}{
      author={Dixon, Alfred~Cardew},
       title={Note on the reduction of a ternary quantic to a symmetrical
  determinant},
        date={1902},
   booktitle={Proc. cambridge philos. soc},
      volume={5},
       pages={350\ndash 351},
}

\bib{dolgachev2012classical}{book}{
      author={Dolgachev, I.V.},
       title={Classical algebraic geometry: a modern view},
   publisher={Cambridge University Press},
        date={2012},
}

\bib{max}{book}{
      author={Fulton, William},
       title={Algebraic curves},
      series={Advanced Book Classics},
   publisher={Addison-Wesley Publishing Company, Advanced Book Program, Redwood
  City, CA},
        date={1989},
        ISBN={0-201-51010-3},
        note={An introduction to algebraic geometry, Notes written with the
  collaboration of Richard Weiss, Reprint of 1969 original},
      review={\MR{1042981}},
}

\bib{HV07}{article}{
      author={Helton, J.~William},
      author={Vinnikov, Victor},
       title={Linear matrix inequality representation of sets},
        date={2007},
        ISSN={0010-3640},
     journal={Comm. Pure Appl. Math.},
      volume={60},
      number={5},
       pages={654\ndash 674},
         url={http://dx.doi.org/10.1002/cpa.20155},
      review={\MR{2292953 (2009a:93050)}},
}

\bib{henrion2017spectra}{article}{
      author={Henrion, Didier},
      author={Naldi, Simone},
      author={Safey El~Din, Mohab},
       title={Spectra--a maple library for solving linear matrix inequalities
  in exact arithmetic},
        date={2017},
     journal={Optimization Methods and Software},
       pages={1\ndash 17},
}

\bib{Krein81}{article}{
      author={Kre{\u\i}n, M.~G.},
      author={Na{\u\i}mark, M.~A.},
       title={The method of symmetric and {H}ermitian forms in the theory of
  the separation of the roots of algebraic equations},
        date={1981},
        ISSN={0308-1087},
     journal={Linear and Multilinear Algebra},
      volume={10},
      number={4},
       pages={265\ndash 308},
         url={http://dx.doi.org/10.1080/03081088108817420},
        note={Translated from the Russian by O. Boshko and J. L. Howland},
      review={\MR{638124 (84i:12016)}},
}

\bib{DetBez}{article}{
      author={Kummer, Mario},
       title={Determinantal representations and {B}\'ezoutians},
        date={2017},
        ISSN={0025-5874},
     journal={Math. Z.},
      volume={285},
      number={1-2},
       pages={445\ndash 459},
         url={https://doi.org/10.1007/s00209-016-1715-9},
      review={\MR{3598819}},
}

\bib{us}{article}{
      author={Kummer, Mario},
      author={Plaumann, Daniel},
      author={Vinzant, Cynthia},
       title={Hyperbolic polynomials, interlacers, and sums of squares},
        date={2015},
        ISSN={0025-5610},
     journal={Math. Program.},
      volume={153},
      number={1, Ser. B},
       pages={223\ndash 245},
         url={http://dx.doi.org/10.1007/s10107-013-0736-y},
      review={\MR{3395549}},
}

\bib{PV13}{article}{
      author={Plaumann, Daniel},
      author={Vinzant, Cynthia},
       title={Determinantal representations of hyperbolic plane curves: an
  elementary approach},
        date={2013},
        ISSN={0747-7171},
     journal={J. Symbolic Comput.},
      volume={57},
       pages={48\ndash 60},
         url={http://dx.doi.org/10.1016/j.jsc.2013.05.004},
      review={\MR{3066450}},
}

\bib{powers1998algorithm}{article}{
      author={Powers, V.},
      author={W{\"o}rmann, T.},
       title={An algorithm for sums of squares of real polynomials},
        date={1998},
     journal={Journal of Pure and Applied Algebra},
      volume={127},
      number={1},
       pages={99\ndash 104},
}

\bib{Vppf}{incollection}{
      author={Vinnikov, Victor},
       title={L{MI} representations of convex semialgebraic sets and
  determinantal representations of algebraic hypersurfaces: past, present, and
  future},
        date={2012},
   booktitle={Mathematical methods in systems, optimization, and control},
      series={Oper. Theory Adv. Appl.},
      volume={222},
   publisher={Birkh\"auser/Springer Basel AG, Basel},
       pages={325\ndash 349},
         url={http://dx.doi.org/10.1007/978-3-0348-0411-0_23},
      review={\MR{2962792}},
}

\end{biblist}
\end{bibdiv}
\end{document}